\documentclass{amsart}
\usepackage{graphicx} 

\title{A note on ubiquity of geometric Brascamp--Lieb data}
\author{Neal Bez}
\address[Neal Bez]{Department of Mathematics, Graduate School of Science and Engineering,
Saitama University, Saitama 338-8570, Japan}
\email{nealbez@mail.saitama-u.ac.jp}

\author{Anthony Gauvan}
\address[Anthony Gauvan]{Department of Mathematics, Graduate School of Science and Engineering,
Saitama University, Saitama 338-8570, Japan}
\email{anthonygauvan@mail.saitama-u.ac.jp}

\author{Hiroshi Tsuji}
\address[Hiroshi Tsuji]{Department of Mathematics, Graduate School of Science and Engineering,
Saitama University, Saitama 338-8570, Japan}
\email{tsujihiroshi@mail.saitama-u.ac.jp}

\usepackage{amsfonts, amssymb, amsmath, verbatim, amsthm, mathrsfs, amscd, enumerate, ascmac, fancyhdr, caption, hyperref, framed, subfigure}
\usepackage{bbm}
\numberwithin{equation}{section}
\usepackage{tikz}
\usetikzlibrary{positioning,intersections, calc, arrows,decorations.markings, angles}

\newtheorem{theorem}{Theorem}[section]
\newtheorem{lemma}[theorem]{Lemma}
\newtheorem*{lemma*}{Lemma}
\newtheorem{proposition}[theorem]{Proposition}
\newtheorem*{proposition*}{Proposition}
\newtheorem{corollary}[theorem]{Corollary}

\newtheorem{question}[theorem]{Question}
\theoremstyle{definition}

\newtheorem*{claim*}{Claim}

\theoremstyle{remark}
\newtheorem*{remark}{Remark}

\thanks{
This work was supported by JSPS Kakenhi grant numbers 22H00098, 23K25777, 24H00024  (Bez), 
23KF0188 (Gauvan), and 24KJ0030 (Tsuji).
}

\begin{document}

\maketitle

\begin{abstract}
Relying substantially on work of Garg, Gurvits, Oliveira and Wigderson, it is shown that geometric Brascamp--Lieb data are, in a certain sense, ubiquitous. This addresses a question raised by Bennett and Tao in their recent work on the adjoint Brascamp--Lieb inequality.
\end{abstract}


\section{Background}
Given linear transformations $B_j : \mathbb{R}^n \to \mathbb{R}^{n_j}$ and exponents $c_j > 0$ for $j \in [m]$, consider the inequality
\begin{equation} \label{e:BL}
    \int_{\mathbb{R}^n} \prod_{j=1}^m f_j(B_jx)^{c_j} \, dx \leq C \prod_{j=1}^m \bigg(\int_{\mathbb{R}^{n_j}} f_j \bigg)^{c_j}
\end{equation}
for non-negative and integrable functions $f_j : \mathbb{R}^{n_j} \to \mathbb{R}$. H\"older's inequality, the Loomis--Whitney inequality and Young's convolution inequality are important special cases, but the systematic study of \eqref{e:BL} in the above framework began in earnest in work of Brascamp and Lieb \cite{BL} and as a result inequality \eqref{e:BL} is known as the \emph{Brascamp--Lieb inequality}. 

Following standard notation, we write $\mathbf{B} = (B_j)_{j=1}^m$, $\mathbf{c} = (c_j)_{j=1}^m$, and call them $m$-transformations and $m$-exponents, respectively. The pair $(\mathbf{B},\mathbf{c})$ is called a \emph{Brascamp--Lieb datum}. The best (smallest) constant in \eqref{e:BL} is called the \emph{Brascamp--Lieb constant} and given by
\[
\mathrm{BL}(\mathbf{B},\mathbf{c}) = \sup_{\mathbf{f}} \mathrm{BL}(\mathbf{B},\mathbf{c};\mathbf{f}),
\]
where
\[
\mathrm{BL}(\mathbf{B},\mathbf{c};\mathbf{f}) = \frac{\int_{\mathbb{R}^n} \prod_{j=1}^m f_j(B_jx)^{c_j} \, dx}{\prod_{j=1}^m (\int_{\mathbb{R}^{n_j}} f_j )^{c_j}}
\]
and the supremum is taken over all inputs $\mathbf{f} = (f_j)_{j=1}^m$ such that $\int_{\mathbb{R}^{n_j}} f_j \in (0,\infty)$. If the supremum is attained by some input $\mathbf{f}$, then the datum $(\mathbf{B},\mathbf{c})$ is said to be \emph{extremisable}. In the case $\mathrm{BL}(\mathbf{B},\mathbf{c}) < \infty$, we say that the datum $(\mathbf{B},\mathbf{c})$ is \emph{feasible} (irrespective of whether it is extremisable or not).

A remarkable theorem of Lieb says that  the Brascamp--Lieb constant is exhausted by centred gaussians. To state Lieb's theorem precisely, let us also introduce the notation
\[
\mathrm{BL_g}(\mathbf{B},\mathbf{c}) = \sup_{\mathbf{A}}\mathrm{BL}(\mathbf{L},\mathbf{c};\mathbf{A}).
\]
Here we are slightly misusing notation and writing
\[
\mathrm{BL}(\mathbf{B},\mathbf{c};\mathbf{A}) := \mathrm{BL}(\mathbf{B},\mathbf{c};\mathbf{f})
\]
when $f_j(x) = \exp(-\pi \langle A_jx,x\rangle)$ for some positive definite transformation $A_j : \mathbb{R}^{n_j} \to \mathbb{R}^{n_j}$.
\begin{theorem}\cite{Lieb} \label{t:Lieb}
For any Brascamp--Lieb datum $(\mathbf{B},\mathbf{c})$ we have
\[
\mathrm{BL}(\mathbf{B},\mathbf{c}) = \sup_{\mathbf{A}} \mathrm{BL}(\mathbf{B},\mathbf{c};\mathbf{A}).
\]
\end{theorem}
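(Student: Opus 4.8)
The inequality $\mathrm{BL_g}(\mathbf B,\mathbf c)\le\mathrm{BL}(\mathbf B,\mathbf c)$ is immediate from the definitions, so the plan is to prove the reverse inequality $\mathrm{BL}(\mathbf B,\mathbf c)\le\mathrm{BL_g}(\mathbf B,\mathbf c)$ via Lieb's tensor-power and central limit argument. First I would dispose of the degenerate cases: if $\bigcap_{j}\ker B_j\neq\{0\}$, or the scaling identity $\sum_j c_jn_j=n$ fails, or some inequality $\dim V\le\sum_j c_j\dim B_jV$ fails for a subspace $V\subseteq\mathbb R^n$, then testing \eqref{e:BL} and its gaussian counterpart against dilates, indicators of thin slabs, or suitably anisotropic gaussians shows that both constants equal $+\infty$. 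Henceforth I assume the scaling identity and all the dimension inequalities hold, so that, appealing to the standard finiteness criterion, $\mathrm{BL}(\mathbf B,\mathbf c)$ and $\mathrm{BL_g}(\mathbf B,\mathbf c)$ are finite. By monotone convergence and density it suffices to bound $\mathrm{BL}(\mathbf B,\mathbf c;\mathbf f)$ for $\mathbf f$ with each $f_j$ a bounded, compactly supported probability density; and since we are chasing $\mathrm{BL}(\mathbf B,\mathbf c)=\sup_{\mathbf f}\mathrm{BL}(\mathbf B,\mathbf c;\mathbf f)$, I would fix for each $N\in\mathbb N$ such an input $\mathbf f=\mathbf f^{(N)}$ that is an $N^{-2}$-near-extremiser, i.e.\ $\mathrm{BL}(\mathbf B,\mathbf c;\mathbf f^{(N)})\ge(1-N^{-2})\mathrm{BL}(\mathbf B,\mathbf c)$.

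The heart of the matter is a tensorisation identity followed by a rotation. Let $B_j^{(N)}=B_j\otimes\mathrm{id}_{\mathbb R^N}\colon(\mathbb R^n)^N\to(\mathbb R^{n_j})^N$ be the $N$-fold amplification. The identity $\mathrm{BL}(\mathbf B^{(N)},\mathbf c)=\mathrm{BL}(\mathbf B,\mathbf c)^N$ holds (the nontrivial inequality being a Fubini iteration of \eqref{e:BL}), and the product input $(f_j^{\otimes N})_j$ realises $\mathrm{BL}(\mathbf B^{(N)},\mathbf c;(f_j^{\otimes N}))=\mathrm{BL}(\mathbf B,\mathbf c;\mathbf f)^N$. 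Now choose $R_N\in\mathrm O(N)$ whose first column is a unit vector $u^{(N)}$ with $\max_\ell|u^{(N)}_\ell|\to0$ and $\sum_\ell u^{(N)}_\ell\to0$ (for instance proportional to $(1,-1,1,-1,\dots)$; this choice re-centres the functions appearing below and so avoids a separate reduction to centred inputs). Conjugating the datum by $\mathrm{id}\otimes R_N$ on source and target intertwines with $B_j^{(N)}$ and has unit Jacobian, so replacing $f_j^{\otimes N}$ by $G_j:=f_j^{\otimes N}\circ(\mathrm{id}_{\mathbb R^{n_j}}\otimes R_N)$ leaves the Brascamp--Lieb functional unchanged. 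Integrating out all copies but the first and applying \eqref{e:BL} for $\mathbf B^{(N-1)}$ in those variables gives
\[
\mathrm{BL}(\mathbf B,\mathbf c;\mathbf f)^N=\mathrm{BL}(\mathbf B^{(N)},\mathbf c;(G_j))\le\mathrm{BL}(\mathbf B,\mathbf c)^{N-1}\,\mathrm{BL}\bigl(\mathbf B,\mathbf c;(\widetilde G_j)\bigr),
\]
where $\widetilde G_j$ is the first-copy marginal of $G_j$, that is, the law of $\sum_{\ell=1}^N u^{(N)}_\ell Z_\ell$ for independent $Z_1,\dots,Z_N$ with common law $f_j$.

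Inserting $\mathbf f=\mathbf f^{(N)}$ and using near-extremality, the left-hand side is at least $(1-N^{-2})^N\mathrm{BL}(\mathbf B,\mathbf c)^N$, whence
\[
\mathrm{BL}\bigl(\mathbf B,\mathbf c;(\widetilde G_j^{(N)})\bigr)\ge(1-N^{-2})^N\,\mathrm{BL}(\mathbf B,\mathbf c)\xrightarrow[N\to\infty]{}\mathrm{BL}(\mathbf B,\mathbf c).
\]
On the other hand $\widetilde G_j^{(N)}$ is the law of a normalised sum of bounded independent summands with total variance equal to that of $f_j^{(N)}$, so the Lindeberg central limit theorem (for this triangular array) forces $\widetilde G_j^{(N)}$ to approach a centred gaussian, with $L^1$-convergence of densities supplied by a local limit theorem; passing to a subsequence one may assume the covariances converge. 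Since $\mathrm{BL}(\mathbf B,\mathbf c;(\widetilde G_j^{(N)}))\le\mathrm{BL}(\mathbf B,\mathbf c)<\infty$ for all $N$, combining an upper-semicontinuity property of $\mathbf f\mapsto\mathrm{BL}(\mathbf B,\mathbf c;\mathbf f)$ along this sequence with the bound $\mathrm{BL}(\mathbf B,\mathbf c;\mathbf g)\le\mathrm{BL_g}(\mathbf B,\mathbf c)$ valid for every gaussian input $\mathbf g$ gives $\limsup_N\mathrm{BL}(\mathbf B,\mathbf c;(\widetilde G_j^{(N)}))\le\mathrm{BL_g}(\mathbf B,\mathbf c)$, and therefore $\mathrm{BL}(\mathbf B,\mathbf c)\le\mathrm{BL_g}(\mathbf B,\mathbf c)$.

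The step I expect to be the main obstacle is this final passage to the gaussian limit. Since the near-extremisation level must be tied to $N$ (so that $(1-N^{-2})^N$ does not collapse to $0$), the family $f_j^{(N)}$ genuinely varies with $N$; after the common rescaling one performs to keep the Lindeberg condition under control, its limiting covariances may degenerate — precisely the phenomenon of proper critical subspaces — and one must then either run an induction on dimension that splits the datum along such subspaces, or approximate degenerate gaussians by non-degenerate ones while controlling the Brascamp--Lieb functional, and in either case verify enough (semi)continuity of the functional to justify taking the limit. An alternative to the central-limit step is furnished by heat-flow / semigroup monotonicity arguments, which we do not pursue here.
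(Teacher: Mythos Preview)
The paper does not prove Theorem~\ref{t:Lieb}; it is simply quoted as a foundational result with a citation to Lieb's 1990 paper. Consequently there is nothing in the paper to compare your proposal against.

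As for the proposal itself: the tensor-power/rotation/central-limit strategy you outline is a legitimate route to Lieb's theorem, though it is not Lieb's original argument (which proceeds via a different compactness-and-iteration scheme on the gaussian side). Your mechanical steps---the tensorisation identity $\mathrm{BL}(\mathbf B^{(N)},\mathbf c)=\mathrm{BL}(\mathbf B,\mathbf c)^N$, the rotation invariance, and the ``integrate out $N-1$ copies using the Brascamp--Lieb inequality itself'' step---are correct as written. You are also right that the genuine difficulty is the final passage to the gaussian limit: because the near-extremisers $\mathbf f^{(N)}$ must vary with $N$, the triangular-array CLT only yields a gaussian whose covariance is the (subsequential) limit of $\mathrm{Cov}(f_j^{(N)})$, and this limit may be degenerate. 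Handling that degeneration is exactly the phenomenon of critical subspaces, and completing the argument requires either the structural induction on such subspaces (as in Bennett--Carbery--Christ--Tao) or a separate approximation of degenerate gaussians together with a verified (upper-)semicontinuity of $\mathbf f\mapsto\mathrm{BL}(\mathbf B,\mathbf c;\mathbf f)$ along the relevant sequences. Your sketch is honest about this gap, but filling it is a substantial piece of work and is where most of the content of any proof of Lieb's theorem actually lives.
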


In addition to the special cases of the Brascamp--Lieb inequality mentioned already, we next introduce the geometric Brascamp--Lieb inequality. The Brascamp--Lieb datum $(\mathbf{G},\mathbf{c})$ is said
to be \emph{geometric} if 
\begin{equation} \label{e:projection}
G_jG_j^* = I_{n_j} \qquad (j \in [m])    
\end{equation}
and
\begin{equation} \label{e:isotropy}
\sum_{j=1}^m c_j G_j^*G_j = I_{n}.
\end{equation}
Of course, \eqref{e:projection} simply means that each $G_j^*$ is an isometry, whilst \eqref{e:isotropy} can be viewed as a kind of transversality condition involving the orthogonal projections $G_j^*G_j$. For later use, it will be convenient to introduce the class of $m$-transformations
\[
\mathcal{G}(\mathbf{c}) := \{ \mathbf{G} : \text{$(\mathbf{G},\mathbf{c})$ is geometric} \}
\]
associated with a fixed $m$-exponent $\mathbf{c}$.
\begin{theorem} \cite{Ball_GBL, Barthe_Invent} \label{t:GBL}
    If $\mathbf{G} \in \mathcal{G}(\mathbf{c})$ then $\mathrm{BL}(\mathbf{G},\mathbf{c}) = 1$. In fact,
    \[
    \mathrm{BL}(\mathbf{G},\mathbf{c}) = \mathrm{BL}(\mathbf{G},\mathbf{c};\mathbf{A})
    \]
    with $A_j = I_{n_j}$, $j \in [m]$.
\end{theorem}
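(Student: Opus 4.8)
The plan is to use Lieb's theorem (Theorem~\ref{t:Lieb}) to reduce the statement to a single determinant inequality for positive definite matrices. First I would record the standard Gaussian computation: if $f_j(x) = \exp(-\pi\langle A_jx,x\rangle)$ with $A_j$ positive definite, then $\int_{\mathbb{R}^{n_j}} f_j = (\det A_j)^{-1/2}$, while $\prod_{j} f_j(G_jx)^{c_j} = \exp(-\pi \langle M_{\mathbf A} x, x\rangle)$ with $M_{\mathbf A} := \sum_{j=1}^m c_j G_j^* A_j G_j$. Since $G_j^*G_j \succeq 0$ and $A_j \succeq \lambda_{\min}(A_j) I_{n_j}$, condition \eqref{e:isotropy} gives $M_{\mathbf A} \succeq (\min_{j}\lambda_{\min}(A_j))\, I_n \succ 0$, so the Gaussian is integrable and $\int_{\mathbb{R}^n}\prod_j f_j(G_jx)^{c_j}\,dx = (\det M_{\mathbf A})^{-1/2}$. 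Hence
\[
\mathrm{BL}(\mathbf{G},\mathbf{c};\mathbf{A}) = \frac{\prod_{j=1}^m (\det A_j)^{c_j/2}}{(\det M_{\mathbf A})^{1/2}}.
\]
Taking $A_j = I_{n_j}$ for every $j$ and invoking \eqref{e:isotropy} we get $M_{\mathbf A} = I_n$, so $\mathrm{BL}(\mathbf{G},\mathbf{c};\mathbf{A}) = 1$ in this case; in particular $\mathrm{BL}(\mathbf{G},\mathbf{c}) \ge 1$ by Theorem~\ref{t:Lieb}.

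It then remains to prove that
\[
\det\Big(\sum_{j=1}^m c_j G_j^* A_j G_j\Big) \ge \prod_{j=1}^m (\det A_j)^{c_j}
\]
for all positive definite $A_j$; this yields $\mathrm{BL}(\mathbf{G},\mathbf{c};\mathbf{A}) \le 1$ for every Gaussian input, hence $\mathrm{BL}(\mathbf{G},\mathbf{c}) \le 1$ by Theorem~\ref{t:Lieb}, and combined with the previous paragraph gives both $\mathrm{BL}(\mathbf{G},\mathbf{c}) = 1$ and attainment at $A_j = I_{n_j}$. To prove the inequality I would diagonalise each $A_j = \sum_{k=1}^{n_j}\lambda_{j,k} v_{j,k}v_{j,k}^*$ with $\{v_{j,k}\}_k$ orthonormal in $\mathbb{R}^{n_j}$, and set $w_{j,k} := G_j^*v_{j,k} \in \mathbb{R}^n$. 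Then \eqref{e:projection} forces $|w_{j,k}| = 1$; since $\sum_k v_{j,k}v_{j,k}^* = I_{n_j}$ we get $G_j^*G_j = \sum_k w_{j,k}w_{j,k}^*$, so \eqref{e:isotropy} becomes $\sum_{j,k} c_j\, w_{j,k}w_{j,k}^* = I_n$, and $\sum_j c_j G_j^* A_j G_j = \sum_{j,k} c_j \lambda_{j,k}\, w_{j,k}w_{j,k}^*$. Since $\prod_{j,k}\lambda_{j,k}^{c_j} = \prod_j(\det A_j)^{c_j}$, everything reduces to the rank-one claim: for unit vectors $w_\ell \in \mathbb{R}^n$ ($\ell \in [L]$) and weights $\theta_\ell > 0$ with $\sum_\ell \theta_\ell w_\ell w_\ell^* = I_n$, one has $\det\big(\sum_\ell \theta_\ell \mu_\ell w_\ell w_\ell^*\big) \ge \prod_\ell \mu_\ell^{\theta_\ell}$ for all $\mu_\ell > 0$.

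For this rank-one claim I would form the $n \times L$ matrix $V$ whose $\ell$-th column is $\sqrt{\theta_\ell}\, w_\ell$, so that $VV^* = I_n$ and $\sum_\ell \theta_\ell \mu_\ell w_\ell w_\ell^* = V\,\mathrm{diag}(\mu)\,V^*$. The Cauchy--Binet formula gives $\det(V\,\mathrm{diag}(\mu)\,V^*) = \sum_{|S|=n} (\det V_S)^2 \prod_{\ell \in S}\mu_\ell$, where $V_S$ is the $n\times n$ submatrix on the columns in $S$; specialising to $\mu \equiv 1$ gives $\sum_{|S|=n}(\det V_S)^2 = \det(VV^*) = 1$, so the numbers $(\det V_S)^2$ form a probability distribution over the $n$-subsets $S$ of $[L]$. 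Weighted AM--GM then yields
\[
\det(V\,\mathrm{diag}(\mu)\,V^*) \ge \prod_{|S|=n}\Big(\prod_{\ell\in S}\mu_\ell\Big)^{(\det V_S)^2} = \prod_\ell \mu_\ell^{\gamma_\ell}, \qquad \gamma_\ell := \sum_{S \ni \ell}(\det V_S)^2 .
\]
The one genuinely delicate point — which I expect to be the main obstacle — is verifying $\gamma_\ell = \theta_\ell$. I would establish this by differentiating the Cauchy--Binet identity (a polynomial identity in $\mu$) with respect to $\mu_\ell$ at $\mu = (1,\dots,1)$: the right-hand side contributes $\gamma_\ell$, while by Jacobi's formula the left-hand side contributes $\mathrm{tr}\big((VV^*)^{-1} V E_{\ell\ell} V^*\big) = (V^*V)_{\ell\ell} = \theta_\ell|w_\ell|^2 = \theta_\ell$, with $E_{\ell\ell}$ the $L\times L$ matrix having $(\ell,\ell)$-entry $1$ and all other entries $0$. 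Applying the rank-one claim with $\ell = (j,k)$, $\theta_{(j,k)} = c_j$, $\mu_{(j,k)} = \lambda_{j,k}$ and unwinding the reductions completes the proof; equality when $A_j = I_{n_j}$ is transparent, since then all $\lambda_{j,k} = 1$ (equivalently $M_{\mathbf A} = I_n$ and the denominator above is $1$).
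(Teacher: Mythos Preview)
Your argument is correct. The reduction via Lieb's theorem to the determinant inequality, the further reduction to rank one by diagonalising each $A_j$, and the Cauchy--Binet/AM--GM computation (with the Jacobi-formula identification $\gamma_\ell=\theta_\ell$) are all sound. One small remark: the lower bound $\mathrm{BL}(\mathbf G,\mathbf c)\ge 1$ does not actually need Theorem~\ref{t:Lieb}; it follows at once from the definition of $\mathrm{BL}$ as a supremum, since you have exhibited a single input (the identity Gaussians) with value $1$. Lieb's theorem is needed only for the upper bound.

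The paper does not give its own proof of Theorem~\ref{t:GBL}: the result is quoted from Ball (rank-one case) and Barthe (general case). Your route is genuinely different from those original proofs in that it goes through Theorem~\ref{t:Lieb}, which is itself a deep result. Ball and Barthe prove the functional inequality \eqref{e:BL} \emph{directly} for geometric data --- Barthe, in particular, via optimal mass transport --- without any prior Gaussian reduction. What your approach buys is that, once Lieb's theorem is granted, the remaining work is short and purely linear-algebraic; what the original proofs buy is self-containment, since they do not rest on the substantially harder Theorem~\ref{t:Lieb}. Within the logic of the present paper, where Theorem~\ref{t:Lieb} is already being taken as known, your argument is a perfectly acceptable and efficient substitute for the citations.
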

The geometric Brascamp--Lieb inequality was discovered first by Ball \cite{Ball_GBL} when $n_j = 1$ for all $j \in [m]$, and later Barthe \cite{Barthe_Invent} extended Ball's result to higher dimensions. Ball was able to establish some remarkable results in convex geometry using the geometric Brascamp--Lieb inequality, including progress on the cube slicing problem (see, for example, \cite{Ball_GBL}). Perhaps surprisingly, it turns out that geometric Brascamp--Lieb data play a fundamental role in the general theory of the Brascamp--Lieb inequality. In particular, geometric data are often referred to in terms like ``generic". One manifestation of this is the following fundamental result of Bennett, Carbery, Christ and Tao \cite{BCCT}.
\begin{theorem} \cite{BCCT} \label{t:BCCT_geometricequivalent}
The Brascamp--Lieb datum $(\mathbf{B},\mathbf{c})$ is extremisable if and only if $(\mathbf{B},\mathbf{c})$ is equivalent to a geometric datum.
\end{theorem}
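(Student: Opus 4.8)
The plan is to establish the two implications separately, the backward one being routine and the forward one carrying all the content. For the backward implication, suppose $(\mathbf{B},\mathbf{c})$ is equivalent to a geometric datum $(\mathbf{G},\mathbf{c})$, in the usual sense that there are invertible $C\colon\mathbb{R}^n\to\mathbb{R}^n$ and $C_j\colon\mathbb{R}^{n_j}\to\mathbb{R}^{n_j}$ with $G_j=C_jB_jC^{-1}$ for $j\in[m]$. Substituting $y=C^{-1}x$ in the numerator of \eqref{e:BL} and writing $g_j:=f_j\circ C_j$ gives
\[
\mathrm{BL}(\mathbf{G},\mathbf{c};\mathbf{f})=\frac{|\det C|}{\prod_{j=1}^m|\det C_j|^{c_j}}\,\mathrm{BL}(\mathbf{B},\mathbf{c};\mathbf{g}),
\]
and $\mathbf{f}\mapsto\mathbf{g}$ is a bijection between admissible inputs. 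Hence $\mathrm{BL}(\mathbf{B},\mathbf{c})$ is attained precisely when $\mathrm{BL}(\mathbf{G},\mathbf{c})$ is, and by Theorem~\ref{t:GBL} the latter is attained, at $f_j(x)=e^{-\pi|x|^2}$. So $(\mathbf{B},\mathbf{c})$ is extremisable.

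For the forward implication, assume $(\mathbf{B},\mathbf{c})$ is extremisable, hence feasible. First I would remove degeneracies: if $v\in\bigcap_j\ker B_j$ is nonzero then the left-hand side of \eqref{e:BL} is independent of the $v$-direction and infinite, so $\bigcap_j\ker B_j=\{0\}$; dilating the $f_j$ forces the scaling identity $\sum_{j=1}^mc_jn_j=n$; and if some $B_j$ failed to be surjective, concentrating $f_j$ transverse to its range would make the left-hand side infinite, so every $B_j$ is surjective. Now by Lieb's theorem (Theorem~\ref{t:Lieb}) and the classical Gaussian integral computation,
\[
\mathrm{BL}(\mathbf{B},\mathbf{c})^2=\sup_{\mathbf{A}}\Phi(\mathbf{A}),\qquad\Phi(\mathbf{A}):=\frac{\prod_{j=1}^m(\det A_j)^{c_j}}{\det\bigl(\sum_{j=1}^mc_jB_j^*A_jB_j\bigr)},
\]
the supremum being over tuples $\mathbf{A}=(A_j)_{j=1}^m$ of positive definite $A_j\colon\mathbb{R}^{n_j}\to\mathbb{R}^{n_j}$; note that $\sum_jc_jB_j^*A_jB_j$ is positive definite because $\bigcap_j\ker B_j=\{0\}$, and that $\Phi$ is invariant under $\mathbf{A}\mapsto t\mathbf{A}$ by the scaling identity.

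The crucial step is to upgrade extremisability to \emph{Gaussian} extremisability, that is, to show $\sup_{\mathbf{A}}\Phi(\mathbf{A})$ is attained. I would take an extremising input $\mathbf{f}$ and Gaussian-ise it — for instance by evolving $\mathbf{f}$ under the heat semigroup, or by a central-limit rescaling in the spirit of Brascamp and Lieb — to obtain positive definite tuples $\mathbf{A}^{(k)}$ with $\Phi(\mathbf{A}^{(k)})\to\mathrm{BL}(\mathbf{B},\mathbf{c})^2$, and then use the existence of the genuine extremiser $\mathbf{f}$ to prevent degeneration: modulo the scaling symmetry and the equivalence-group action, the spectra of the $A_j^{(k)}$ should stay in a fixed compact subset of $(0,\infty)$, since escape of an eigenvalue to $0$ or $\infty$ would force a concentration of $\mathbf{f}$ along a subspace that, by the subspace inequalities defining the Brascamp--Lieb polytope, is incompatible with $\mathbf{f}$ attaining the supremum. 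A compactness argument would then yield a positive definite maximiser, say $\mathbf{A}=(A_j)_{j=1}^m$ with $\Phi(\mathbf{A})=\mathrm{BL}(\mathbf{B},\mathbf{c})^2$. I expect this non-degeneration step to be the main obstacle, since it is exactly here that extremisability, rather than mere feasibility, must be exploited.

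With such a maximiser in hand, the geometric structure falls out of the Euler--Lagrange equations. Set $M:=\sum_{j=1}^mc_jB_j^*A_jB_j$, which is positive definite. Differentiating $\log\Phi$ at $\mathbf{A}$ in an arbitrary symmetric direction $H$ in the $j$-th slot, and using $\tfrac{d}{dt}\log\det(S+tH)=\mathrm{tr}(S^{-1}H)$ at $t=0$, stationarity gives $\mathrm{tr}(A_j^{-1}H)=\mathrm{tr}(B_jM^{-1}B_j^*H)$ for every symmetric $H$, hence $A_j^{-1}=B_jM^{-1}B_j^*$ for each $j$. Now put $C:=M^{1/2}$, $C_j:=A_j^{1/2}$ and $G_j:=C_jB_jC^{-1}=A_j^{1/2}B_jM^{-1/2}$. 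Then
\[
G_jG_j^*=A_j^{1/2}\bigl(B_jM^{-1}B_j^*\bigr)A_j^{1/2}=A_j^{1/2}A_j^{-1}A_j^{1/2}=I_{n_j}
\]
by the stationarity identity, while
\[
\sum_{j=1}^mc_jG_j^*G_j=M^{-1/2}\Bigl(\sum_{j=1}^mc_jB_j^*A_jB_j\Bigr)M^{-1/2}=I_n
\]
by the definition of $M$. Hence $(\mathbf{G},\mathbf{c})$ is geometric and $(\mathbf{B},\mathbf{c})$ is equivalent to it via $C$ and the $C_j$, which completes the forward implication.
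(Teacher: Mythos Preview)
The paper does not prove Theorem~\ref{t:BCCT_geometricequivalent}; it is quoted from \cite{BCCT} as background, so there is no in-paper proof to compare against. What you have written is, in outline, the strategy of \cite{BCCT}: the backward implication is exactly the change-of-variables argument (cf.\ \eqref{e:equivdataBLconstants} and Theorem~\ref{t:GBL}), and your step~4 --- the Euler--Lagrange computation turning a Gaussian extremiser into an explicit geometric equivalent via $G_j=A_j^{1/2}B_jM^{-1/2}$ --- is correct and is essentially \cite[Proposition~3.6]{BCCT}.

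The genuine gap is your step~3, and you have identified it yourself. The passage from ``some extremiser exists'' to ``a \emph{Gaussian} extremiser exists'' is the entire content of the forward implication, and your sketch does not supply it. Running heat flow or a central-limit iteration on an extremising $\mathbf{f}$ produces near-extremising Gaussians, but this alone only recovers Lieb's theorem $\mathrm{BL}=\mathrm{BL_g}$; it does not, without more, give a maximiser of $\Phi$. Your proposed compactness argument --- that escape of an eigenvalue of $A_j^{(k)}$ to $0$ or $\infty$ would force a concentration of the original $\mathbf{f}$ incompatible with the Brascamp--Lieb polytope constraints --- is a heuristic, not a proof: one must actually link the spectral degeneration of the Gaussian sequence back to structural information about $\mathbf{f}$, and this is precisely where \cite{BCCT} invokes its structural machinery (decomposition into indecomposable pieces, critical subspaces, and the characterisation of equality cases). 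In particular, feasible but non-extremisable data exist, so any argument that does not visibly use the extremiser $\mathbf{f}$ at this point cannot succeed; your sketch gestures at using $\mathbf{f}$ but does not say how. As written, the proposal is a correct reduction of the theorem to the statement ``extremisable $\Rightarrow$ Gaussian-extremisable'', together with an honest admission that this reduction has not been discharged.
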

The notion of \emph{equivalent data} is as follows. As in \cite[Definition 3.1]{BCCT}, we say that $\mathbf{B}$ and $\widetilde{\mathbf{B}}$ are equivalent if 
\begin{equation*}
\widetilde{B}_j = T_j^{-1}B_jT
\end{equation*}
for invertible transformations $T_j : \mathbb{R}^{n_j} \to \mathbb{R}^{n_j}$ and $T : \mathbb{R}^n \to \mathbb{R}^n$, called \emph{intertwining transformations}. This defines an equivalence relation and we write 
$
[\mathbf{B}]
$
for the equivalence class containing $\mathbf{B}$. The data $(\mathbf{B},\mathbf{c})$ and $(\widetilde{\mathbf{B}},\widetilde{\mathbf{c}})$ are equivalent if $\mathbf{B}$ and $\widetilde{\mathbf{B}}$ are equivalent and $\mathbf{c} = \widetilde{\mathbf{c}}$. By linear changes of variables on both sides of \eqref{e:BL}, it is straightforward to see  that
\begin{equation} \label{e:equivdataBLconstants}
    \mathrm{BL}(\widetilde{\mathbf{B}},\mathbf{c}) = \frac{\prod_{j=1}^m |\det T_j|^{c_j}}{|\det T|} \mathrm{BL}(\mathbf{B},\mathbf{c})
\end{equation}
when $\widetilde{\mathbf{B}} \in [\mathbf{B}]$ with intertwining transformations $T_j : \mathbb{R}^{n_j} \to \mathbb{R}^{n_j}$ and $T : \mathbb{R}^n \to \mathbb{R}^n$; see \cite[Lemma 3.3]{BCCT}. Although it is completely obvious from \eqref{e:equivdataBLconstants}, we would like to emphasise that finiteness of the Brascamp--Lieb constant is preserved under equivalence.

Some of the consensus regarding the genericity of geometric data presumably stems from Theorem \ref{t:BCCT_geometricequivalent} and the expectation that the set of extremisable data should be, in some sense, big. In a recent paper by Bennett and Tao \cite{BennettTao}, in which the \emph{adjoint Brascamp--Lieb inequality} was first discovered, the question of whether extremisable data are in fact dense in the space of feasible data was raised; see \cite[Question 10.6]{BennettTao}. A key estimate in \cite[Theorem 1.11]{BennettTao} is easily verified for extremisable data and so the density of such data, if true, could conceivably be combined with certain continuity arguments to extend the estimate to all data.

It turns out that although extremisable data are not dense in the space of feasible data (for somewhat uninteresting reasons\footnote{For example, see the Remark after Theorem \ref{t:genericity}.}), we shall see that one has density ``modulo equivalence of data". In other words, we shall show that given any feasible datum, an element of its equivalence class can be arbitrarily well approximated by a geometric datum. This is what we refer to as the ``ubiquity" of geometric data and provides a reasonable answer to \cite[Question 10.6]{BennettTao}. In the next section we present such a result; we would like to emphasise that we rely substantially on arguments of Garg, Gurvits, Oliveira and Wigderson \cite{GGOW} (see the Remark after the proof of Proposition \ref{p:gconv} for further clarification). After that, in Section \ref{section:ABL}, we shall see how the ubiquity of geometric data is sufficient to prove the aforementioned key estimate of Bennett--Tao in \cite[Theorem 1.11]{BennettTao} along the lines alluded to above.

\section{Ubiquity of geometric data} \label{section:ubiquity}

 A very useful observation is that if $(\mathbf{B},\mathbf{c})$ is feasible, then it can be transformed to an equivalent data which satisfies \emph{either} one of the two conditions defining geometric data, \eqref{e:projection} and \eqref{e:isotropy}. 
\begin{proposition} \label{p:normalisation}
Suppose $\mathrm{BL}(\mathbf{B},\mathbf{c}) < \infty$.
\begin{itemize}
    \item[(1)] Set 
\begin{equation*}
    \phi(\mathbf{B})_j := M_j^{-1/2}B_j, \qquad M_j := B_jB_j^*.
\end{equation*}
Then $\phi(\mathbf{B}) \in [\mathbf{B}]$ and $\phi(\mathbf{B})_j\phi(\mathbf{B})_j^* = I_{n_j}$ for all $j \in [m]$.
   \item[(2)] Set
\begin{equation*}
    \psi(\mathbf{B})_j := B_jM^{-1/2}, \qquad M := \sum_{j=1}^m c_jB_j^*B_j.
\end{equation*}
Then $\psi(\mathbf{B}) \in [\mathbf{B}]$ and $\sum_{j=1}^m c_j \psi(\mathbf{B})_j^*\psi(\mathbf{B})_j = I_{n}$.
\end{itemize}
\end{proposition}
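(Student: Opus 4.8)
The plan is to verify each of the two claims by direct computation, checking in each case that the asserted normalisation is well-defined, that the new transformations are obtained from the old by intertwining maps, and that the required identity holds.

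For part (1), the first point to address is that $M_j = B_jB_j^*$ is indeed invertible, so that $M_j^{-1/2}$ makes sense. Since $\mathrm{BL}(\mathbf{B},\mathbf{c}) < \infty$, a standard necessary condition (see \cite{BCCT}) forces each $B_j$ to be surjective; hence $M_j : \mathbb{R}^{n_j} \to \mathbb{R}^{n_j}$ is positive definite and $M_j^{-1/2}$ is a well-defined positive definite transformation on $\mathbb{R}^{n_j}$. Then $\phi(\mathbf{B})_j = M_j^{-1/2} B_j = T_j^{-1} B_j T$ with $T_j = M_j^{1/2}$ and $T = I_n$, so $\phi(\mathbf{B}) \in [\mathbf{B}]$. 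Finally,
\[
\phi(\mathbf{B})_j \phi(\mathbf{B})_j^* = M_j^{-1/2} B_j B_j^* M_j^{-1/2} = M_j^{-1/2} M_j M_j^{-1/2} = I_{n_j},
\]
using that $M_j^{-1/2}$ is self-adjoint. This gives \eqref{e:projection} for $\phi(\mathbf{B})$.

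For part (2), one must check that $M = \sum_{j=1}^m c_j B_j^* B_j$ is invertible. Each summand $c_j B_j^* B_j$ is positive semi-definite, so $M \geq 0$, and $M$ fails to be positive definite only if there is a nonzero $x \in \mathbb{R}^n$ with $B_j x = 0$ for all $j$; but feasibility forces $\bigcap_j \ker B_j = \{0\}$ (otherwise the left-hand side of \eqref{e:BL} is infinite for suitable inputs, as the integrand is constant along that subspace), so $M$ is positive definite and $M^{-1/2}$ is well-defined. Then $\psi(\mathbf{B})_j = B_j M^{-1/2} = T_j^{-1} B_j T$ with $T_j = I_{n_j}$ and $T = M^{-1/2}$, hence $\psi(\mathbf{B}) \in [\mathbf{B}]$. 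The isotropy identity follows from
\[
\sum_{j=1}^m c_j \psi(\mathbf{B})_j^* \psi(\mathbf{B})_j = \sum_{j=1}^m c_j M^{-1/2} B_j^* B_j M^{-1/2} = M^{-1/2} \Big( \sum_{j=1}^m c_j B_j^* B_j \Big) M^{-1/2} = M^{-1/2} M M^{-1/2} = I_n,
\]
again using self-adjointness of $M^{-1/2}$.

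The only genuinely non-routine ingredient is the invertibility of $M_j$ and of $M$, which is where feasibility of $(\mathbf{B},\mathbf{c})$ enters; everything else is linear algebra manipulation with square roots of positive definite operators. I expect the cleanest route is to invoke the well-known necessary conditions for $\mathrm{BL}(\mathbf{B},\mathbf{c}) < \infty$ from \cite{BCCT} (surjectivity of each $B_j$ and triviality of $\bigcap_j \ker B_j$), rather than re-deriving them; if one prefers a self-contained argument, both facts follow quickly by testing \eqref{e:BL} against Gaussians adapted to the offending subspace.
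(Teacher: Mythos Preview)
Your proof is correct and follows essentially the same route as the paper: in each part you reduce the claim to showing that $M_j$ (respectively $M$) is positive definite, deduce this from the standard feasibility consequences that each $B_j$ is surjective and $\bigcap_j \ker B_j = \{0\}$ (citing \cite{BCCT}), and then verify the equivalence and the normalisation identity by direct computation. The paper's own proof is the same argument, only slightly more terse in that it leaves the explicit intertwining maps and the matrix identities implicit.
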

\begin{proof}
For (1), it is immediate that $\phi(\mathbf{B}) \in [\mathbf{B}]$ and $\phi(\mathbf{B})_j\phi(\mathbf{B})_j^* = I_{n_j}$ holds, as long as we justify the existence of $M_j^{-1/2}$. This follows since $M_j > 0$; to see this, note that $\langle M_jx,x \rangle = |B_j^*x|^2$ and so $\langle M_jx,x \rangle = 0$ implies $x \in \ker \widetilde{B}_j^* = \{0\}$. The injectivity of $B_j^*$, or equivalently the  surjectivity of $B_j$, follows from the  feasibility of the data; This is a standard fact in the theory of \eqref{e:BL}; see, for example, \cite{BCCT}.

For (2), similarly it is clear that $\psi(\mathbf{B}) \in [\mathbf{B}]$ and $\sum_{j=1}^m c_j \psi(\mathbf{B})_j^*\psi(\mathbf{B})_j = I_{n}$ holds, as long as $M > 0$. To see this, note that $\langle Mx,x \rangle = \sum_{j=1}^m c_j |B_jx|^2$ and hence $\langle Mx,x \rangle = 0$ implies $x \in \bigcap_{j=1}^m \ker B_j = \{0\}$. The fact that only the zero vector belongs to the kernel of every $B_j$ is also a standard consequence of the feasibility of the data (see, for example, \cite{BCCT}). 
\end{proof}

We say an $m$-transformation $\mathbf{B}$ is \emph{projection-normalised} if
\begin{equation*}
  B_jB_j^* = I_{n_j} \qquad (j \in [m])      
\end{equation*}
and, for a given $m$-exponent $\mathbf{c}$, introduce the following subclass of projection-normalised $m$-transformations
\[
\mathcal{F}(\mathbf{c}) := \{ \mathbf{B} : \mathrm{BL}(\mathbf{B},\mathbf{c}) < \infty, B_jB_j^* = I_{n_j} \, (j \in [m]) \}.
\]
Obviously $\mathcal{G}(\mathbf{c}) \subseteq \mathcal{F}(\mathbf{c})$ and, interestingly, it was shown by Valdimarsson \cite{Vald_JGA} that geometric data uniquely minimise the Brascamp--Lieb constant amongst all projection-normalised $m$-transformations.
\begin{theorem} \cite{Vald_JGA} \label{t:bestbest}
    For any $\mathbf{B} \in \mathcal{F}(\mathbf{c})$, we have $\mathrm{BL}(\mathbf{B},\mathbf{c}) \geq 1$ and equality holds if and only if $\mathbf{B} \in \mathcal{G}(\mathbf{c})$.
\end{theorem}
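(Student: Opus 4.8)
The plan is to reduce the whole statement to one Gaussian computation together with the arithmetic--geometric mean inequality. Set $M := \sum_{j=1}^m c_j B_j^* B_j$. Exactly as in the proof of Proposition \ref{p:normalisation}(2), feasibility of $(\mathbf{B},\mathbf{c})$ gives $\bigcap_{j=1}^m \ker B_j = \{0\}$ and hence $M > 0$. Testing \eqref{e:BL} with the standard Gaussians $f_j(x) = \exp(-\pi|x|^2)$ --- equivalently, taking $A_j = I_{n_j}$ in the notation preceding Theorem \ref{t:Lieb} --- and using $\prod_{j} f_j(B_j x)^{c_j} = \exp(-\pi \langle M x, x\rangle)$ together with the Gaussian integral $\int_{\mathbb{R}^n}\exp(-\pi\langle Mx,x\rangle)\,dx = (\det M)^{-1/2}$, one obtains
\[
\mathrm{BL}(\mathbf{B},\mathbf{c}) \ \geq\ \mathrm{BL}(\mathbf{B},\mathbf{c};\mathbf{A}) \ =\ (\det M)^{-1/2}.
\]
Only this trivial lower bound on $\mathrm{BL}$ is needed; in particular Lieb's theorem (Theorem \ref{t:Lieb}) is not required for this argument.

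Next I would estimate $\det M$ from above. Since $\mathbf{B}$ is projection-normalised, cyclicity of the trace gives $\mathrm{tr}(B_j^* B_j) = \mathrm{tr}(B_j B_j^*) = \mathrm{tr}\, I_{n_j} = n_j$, so $\mathrm{tr}\, M = \sum_{j=1}^m c_j n_j$; and feasibility forces the scaling identity $\sum_{j=1}^m c_j n_j = n$ (a standard necessary condition for \eqref{e:BL}; see \cite{BCCT}). Hence $\mathrm{tr}\, M = n$. Applying the arithmetic--geometric mean inequality to the $n$ positive eigenvalues of $M$ yields $\det M \leq (\mathrm{tr}\, M / n)^n = 1$, and combining with the displayed inequality gives $\mathrm{BL}(\mathbf{B},\mathbf{c}) \geq (\det M)^{-1/2} \geq 1$, which is the first assertion.

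For the equality statement, if $\mathbf{B} \in \mathcal{G}(\mathbf{c})$ then $\mathrm{BL}(\mathbf{B},\mathbf{c}) = 1$ is immediate from Theorem \ref{t:GBL}. Conversely, assume $\mathrm{BL}(\mathbf{B},\mathbf{c}) = 1$. Then $(\det M)^{-1/2} \leq 1$, i.e. $\det M \geq 1$, which with $\det M \leq 1$ forces $\det M = 1$. The equality case of the arithmetic--geometric mean inequality then shows that all $n$ eigenvalues of $M$ coincide; being positive and summing to $\mathrm{tr}\, M = n$, each equals $1$, so $M = I_n$ because $M$ is symmetric. Thus $\sum_{j=1}^m c_j B_j^* B_j = I_n$, which combined with the hypothesis $B_j B_j^* = I_{n_j}$ is exactly the assertion $\mathbf{B} \in \mathcal{G}(\mathbf{c})$.

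I do not expect a serious obstacle here: the argument is the Gaussian computation plus a short eigenvalue optimisation, the only external inputs being the standard facts that feasibility yields $M > 0$ and $\sum_{j} c_j n_j = n$. The step needing the most care is the equality analysis --- one must check that equality in AM--GM, in the presence of the fixed trace $\mathrm{tr}\, M = n$, pins $M$ down to be exactly $I_n$ rather than merely a scalar multiple of it; this is settled by noting that the common eigenvalue must equal $\mathrm{tr}\, M / n = 1$.
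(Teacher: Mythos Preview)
Your argument is correct and essentially coincides with the paper's own proof (given in the Remark at the end of the Appendix): test with $A_j=I_{n_j}$ to get $\mathrm{BL}(\mathbf{B},\mathbf{c})\geq(\det M)^{-1/2}$, then apply the AM--GM inequality (this is exactly Lemma~\ref{l:isodec}) together with $\mathrm{tr}\,M=n$ to obtain $\det M\leq 1$, and finish the equality case via the equality condition in AM--GM. Your observation that Lieb's theorem is not required is a mild sharpening of the presentation --- the paper invokes Theorem~\ref{t:Lieb} to write $\mathrm{BL}(\mathbf{B},\mathbf{c})^2$ as a supremum over Gaussians, but since only the trivial inequality $\mathrm{BL}\geq\mathrm{BL}(\cdot;\mathbf{A})$ for one specific $\mathbf{A}$ is used, this is indeed unnecessary.
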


With Theorem \ref{t:BCCT_geometricequivalent} in mind, the following result provides a reasonable sense in which the space of extremisable/geometric data can be considered ubiquitous.
\begin{theorem} \label{t:genericity}
Let $\mathbf{B} \in \mathcal{F}(\mathbf{c})$ and $\varepsilon > 0$. Then there exists $\widetilde{\mathbf{B}} \in [\mathbf{B}]$ and $\mathbf{G} \in \mathcal{G}(\mathbf{c})$ such that
\begin{equation*} \label{e:GBLdensity_epsilon}
 \|\widetilde{\mathbf{B}} - \mathbf{G}\| < \varepsilon.
\end{equation*}
\end{theorem}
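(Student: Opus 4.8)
The plan is to run a Sinkhorn-type alternating normalisation, in the spirit of Garg--Gurvits--Oliveira--Wigderson \cite{GGOW}, starting from $\mathbf{B}$ and converging towards $\mathcal{G}(\mathbf{c})$, and then to take for $\widetilde{\mathbf{B}}$ a sufficiently advanced iterate. Concretely, set $\mathbf{B}^{(0)} := \mathbf{B}$ and, using the two normalisation maps $\phi,\psi$ of Proposition \ref{p:normalisation}, define $\mathbf{B}^{(k+1)} := \phi(\psi(\mathbf{B}^{(k)}))$. Since finiteness of the Brascamp--Lieb constant is preserved under equivalence, every $\mathbf{B}^{(k)}$ is feasible, so each step is well defined by Proposition \ref{p:normalisation}; moreover $\psi(\mathbf{B}^{(k)})$ satisfies the isotropy condition \eqref{e:isotropy}, $\mathbf{B}^{(k+1)}$ satisfies the projection condition \eqref{e:projection}, and by transitivity of equivalence $\mathbf{B}^{(k)} \in [\mathbf{B}]$. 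In particular $\mathbf{B}^{(k)} \in \mathcal{F}(\mathbf{c})$ for every $k$.

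The key monotonicity comes from the transformation law \eqref{e:equivdataBLconstants}. Write $C^{(k)} := \psi(\mathbf{B}^{(k)})$, $M^{(k)} := \sum_{j=1}^m c_j (B^{(k)}_j)^* B^{(k)}_j$, and $N^{(k)}_j := C^{(k)}_j (C^{(k)}_j)^*$. Tracking the intertwining transformations through the two sub-steps gives
\[
\mathrm{BL}(\mathbf{B}^{(k+1)},\mathbf{c}) = (\det M^{(k)})^{1/2} \Big( \prod_{j=1}^m (\det N^{(k)}_j)^{c_j/2} \Big)\, \mathrm{BL}(\mathbf{B}^{(k)},\mathbf{c}).
\]
I claim both parenthesised factors are at most $1$. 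For the first: since $\mathbf{B}^{(k)}$ is projection-normalised, $\operatorname{tr} M^{(k)} = \sum_j c_j n_j = n$ (the scaling identity, necessary for feasibility), so AM--GM on the eigenvalues of $M^{(k)}>0$ gives $\det M^{(k)} \le 1$. For the second: since $C^{(k)}$ is isotropy-normalised, $\sum_j c_j \operatorname{tr} N^{(k)}_j = \operatorname{tr} I_n = n = \sum_j c_j n_j$, whence $\prod_j (\det N^{(k)}_j)^{c_j} \le \prod_j (\operatorname{tr} N^{(k)}_j / n_j)^{c_j n_j} \le 1$ by AM--GM within each block followed by a weighted AM--GM across the $m$ blocks (with weights $c_j n_j$). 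Thus $\mathrm{BL}(\mathbf{B}^{(k)},\mathbf{c})$ is non-increasing; combined with the lower bound $\mathrm{BL}(\mathbf{B}^{(k)},\mathbf{c}) \ge 1$ from Theorem \ref{t:bestbest} and the finiteness of $\mathrm{BL}(\mathbf{B},\mathbf{c})$, telescoping yields $\sum_{k\ge 0}(-\log\det M^{(k)}) \le 2\log \mathrm{BL}(\mathbf{B},\mathbf{c}) < \infty$, and in particular $\det M^{(k)} \to 1$.

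Since $\operatorname{tr} M^{(k)} = n$ for every $k$ and the equality case of AM--GM is rigid, $\det M^{(k)} \to 1$ forces $M^{(k)} \to I_n$; that is, $\sum_j c_j (B^{(k)}_j)^* B^{(k)}_j \to I_n$ while $B^{(k)}_j (B^{(k)}_j)^* = I_{n_j}$ throughout. To upgrade this asymptotic geometricity to closeness to an actual geometric datum I would use compactness: the set of projection-normalised $m$-transformations is compact and $\mathcal{G}(\mathbf{c})$ is a closed subset of it, so any subsequence of $(\mathbf{B}^{(k)})$ has a further subsequence converging to some $\mathbf{B}^{\infty}$, which is projection-normalised and satisfies $\sum_j c_j (B^{\infty}_j)^* B^{\infty}_j = \lim_k M^{(k)} = I_n$, i.e. $\mathbf{B}^{\infty} \in \mathcal{G}(\mathbf{c})$. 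Hence $\min_{\mathbf{G}\in\mathcal{G}(\mathbf{c})} \|\mathbf{B}^{(k)} - \mathbf{G}\| \to 0$ (the minimum attained by compactness of $\mathcal{G}(\mathbf{c})$), so choosing $k$ with this quantity $<\varepsilon$ and letting $\mathbf{G}$ attain it, we take $\widetilde{\mathbf{B}} = \mathbf{B}^{(k)}$.

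The step I expect to be the main obstacle is the monotonicity at the re-normalisation sub-step $\phi$, i.e. the inequality $\prod_j (\det N^{(k)}_j)^{c_j} \le 1$: this is exactly where the structure of the problem must be exploited — the scaling identity $\sum_j c_j n_j = n$ together with AM--GM applied across the $m$ components — and it is the heart of the convergence analysis borrowed from \cite{GGOW}; the remainder is a soft telescoping argument and a soft compactness argument. A secondary technical point is the bookkeeping ensuring that every iterate stays feasible and projection-normalised, so that Proposition \ref{p:normalisation} and Theorem \ref{t:bestbest} apply at each stage.
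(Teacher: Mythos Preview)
Your proof is correct and follows essentially the same route as the paper: the identical Sinkhorn-type flow $\Phi=\phi\circ\psi$, the same monotonicity of $\mathrm{BL}(\Phi_k(\mathbf{B}),\mathbf{c})$ via the AM--GM inequalities of Lemmata~\ref{l:isodec} and~\ref{l:projdec}, the lower bound from Theorem~\ref{t:bestbest}, and the same compactness extraction of a geometric limit. The only cosmetic difference is that you reach $M^{(k)}\to I_n$ by a direct telescoping bound $\sum_k(-\log\det M^{(k)})<\infty$ together with soft AM--GM rigidity, whereas the paper argues by contradiction using a quantitative stable AM--GM (Lemma~\ref{l:isodecstable}); these are two packagings of the same idea.
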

The norm $\|\cdot\|$ may be taken to be any norm on the (euclidean) space of $m$-transformations, but in the interests of concreteness, we fix such a norm from this point.

\begin{remark}
The most naive formulation of density of extremisable data would say that given any $\mathbf{B} \in \mathcal{F}(\mathbf{c})$ and $\varepsilon > 0$, there exists an extremisable data $(\mathbf{X},\mathbf{c})$ such that $\|\mathbf{B} - \mathbf{X}\| < \varepsilon$. However, one can check through simple examples that such a statement cannot hold. For example, let $m = 3$, $B_jx := x \cdot u_j$ for $x \in \mathbb{R}^2$, where $u_j \in \mathbb{R}^2$ are unit vectors for which any pair are linearly independent, and $\mathbf{c} = (1,\frac{1}{2},\frac{1}{2})$. Then $\mathbf{B} \in \mathcal{F}(\mathbf{c})$, but if $\varepsilon > 0$ is sufficiently small it is not possible to find extremisable $(\mathbf{X},\mathbf{c})$ for which $\|\mathbf{B} - \mathbf{X}\| < \varepsilon$. Indeed, for $(\mathbf{X},\mathbf{c})$ to be extremisable when $c_1 = 1$, if we write $X_jx := x \cdot v_j$, then one needs $v_2$ and $v_3$ to be parallel\footnote{This follows, for example, by \cite[Theorem 7.13]{BCCT}.} and this places a uniform positive lower bound on $\|\mathbf{B} - \mathbf{X}\|$. This simple example is informative in the following sense: to recover a density result one should (isotropically) rescale the vectors $u_2$ and $u_3$ to be sufficiently close to the origin (depending on $\varepsilon$), and then one may find $v_j$, with $v_2$ and $v_3$ parallel, for which $\|\mathbf{B} - \mathbf{X}\| < \varepsilon$. Rescaling the $u_j$ means, of course, we are passing from the original data $\mathbf{B}$ to some equivalent data $\widetilde{\mathbf{B}}$.
\end{remark}

As will become clear as we proceed, our proof of Theorem \ref{t:genericity} relies substantially on ideas of Garg, Gurvits, Oliveira and Wigderson in \cite{GGOW}. A suitable $m$-transformation $\widetilde{\mathbf{B}}$ will arise from an explicitly defined sequence of data $(\Phi_k(\mathbf{B}))_k$ determined by the \emph{BL scaling algorithm} in \cite{GGOW}. In this paper, we refer to $(\Phi_k(\mathbf{B}))_k$ as the \emph{BL scaling flow}.

\subsection{Definition of the BL scaling flow}
The BL scaling flow is a discrete flow which is built on repeated iteration of a mapping $\Phi$ on $\mathcal{F}(\mathbf{c})$ and can be found in \cite[Algorithm 1]{GGOW}. Although the algorithm is very powerful, the idea behind it is wonderfully simple: Geometric data are defined by two conditions, \eqref{e:projection} and \eqref{e:isotropy}, and we have already seen in Proposition \ref{p:normalisation} that any feasible data can be mapped to an equivalent data satisfying \emph{either} \eqref{e:projection} or \eqref{e:isotropy} by use of $\phi$ and $\psi$, respectively. The BL scaling flow runs by alternately applying $\phi$ and $\psi$ to the original $m$-transformation in $\mathcal{F}(\mathbf{c})$ in the hope that successively applying these transformations will, in a limiting sense, make \emph{both} \eqref{e:projection} or \eqref{e:isotropy} hold. As the authors of \cite{GGOW} explain, the idea of such a greedy procedure/alternating minimisation algorithm can be found in classical work of Sinkhorn \cite{Sinkhorn}, in which it is used to approximate doubly stochastic matrices by applying procedures which alternately normalise the column sums/row sums, and Gurvits' operator scaling algorithm \cite{Gurvits}. We refer the reader to the survey article by Garg and Oliveira \cite{GargOliveira} for further background and wider perspectives.

Associated with a given initial $m$-transformation $\mathbf{B} \in \mathcal{F}(\mathbf{c})$, we set 
\begin{equation*}
    \Phi(\mathbf{B}) := \phi(\psi(\mathbf{B})).
\end{equation*}
Then $\Phi_k(\mathbf{B})$ is the image of $\mathbf{B}$ under $k$ applications of $\Phi$; i.e.
\[
\Phi_0(\mathbf{B}) = \mathbf{B}, \qquad \Phi_k(\mathbf{B}) = \Phi(\Phi_{k-1}(\mathbf{B})) \qquad (k \in \mathbb{N}).
\]
Observe that from \eqref{e:equivdataBLconstants} we have
\begin{align*}
\mathrm{BL}(\Phi(\mathbf{B}),\mathbf{c}) & = \mathrm{BL}(\psi(\mathbf{B}),\mathbf{c}) \prod_{j=1}^m (\det \psi(\mathbf{B})_j\psi(\mathbf{B})_j^*)^{c_j/2} \\
& = \mathrm{BL}(\mathbf{B},\mathbf{c}) \det\bigg( \sum_{j = 1}^m c_j B_j^*B_j\bigg)^{1/2} \prod_{j=1}^m (\det \psi(\mathbf{B})_j\psi(\mathbf{B})_j^*)^{c_j/2}.
\end{align*}
Some appealing properties of $\Phi$ are recorded below.
\begin{proposition} \label{p:phiprops} 
We have the following.
\begin{enumerate}
\item[(1)] For any $\mathbf{B} \in \mathcal{F}(\mathbf{c})$, we have $\Phi(\mathbf{B}) \in [\mathbf{B}]$.
\item[(2)] $\Phi(\mathcal{F}(\mathbf{c})) \subseteq \mathcal{F}(\mathbf{c})$.
\item[(3)] For any $\mathbf{G} \in \mathcal{G}(\mathbf{c})$, we have $\Phi(\mathbf{G}) = \mathbf{G}$.
\item[(4)] For any $\mathbf{B} \in \mathcal{F}(\mathbf{c})$, we have
\[
\mathrm{BL}(\Phi(\mathbf{B}),\mathbf{c}) \leq \mathrm{BL}(\mathbf{B},\mathbf{c}) \det\bigg( \sum_{j = 1}^m c_j B_j^*B_j\bigg)^{1/2}
\]
\item[(5)] For any $\mathbf{B} \in \mathcal{F}(\mathbf{c})$, we have
\[
\mathrm{BL}(\Phi(\mathbf{B}),\mathbf{c}) \leq \mathrm{BL}(\mathbf{B},\mathbf{c}).
\]
\end{enumerate}
\end{proposition}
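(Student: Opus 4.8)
The plan is to dispatch parts (1)--(3) directly from Proposition~\ref{p:normalisation} and the definitions, and to reduce the quantitative statements (4)--(5) to two applications of the arithmetic--geometric mean inequality. For (1): by Proposition~\ref{p:normalisation}(2) we have $\psi(\mathbf{B})\in[\mathbf{B}]$, so $\psi(\mathbf{B})$ is again feasible (finiteness of the Brascamp--Lieb constant being preserved under equivalence, as recorded after \eqref{e:equivdataBLconstants}); hence $\phi$ applies to $\psi(\mathbf{B})$ and Proposition~\ref{p:normalisation}(1) gives $\Phi(\mathbf{B})=\phi(\psi(\mathbf{B}))\in[\psi(\mathbf{B})]=[\mathbf{B}]$ by transitivity of the equivalence relation. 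For (2): part~(1) shows $\Phi(\mathbf{B})\in[\mathbf{B}]$ is feasible, while the conclusion of Proposition~\ref{p:normalisation}(1) shows that it is projection-normalised, so $\Phi(\mathbf{B})\in\mathcal{F}(\mathbf{c})$. For (3): if $\mathbf{G}$ is geometric then \eqref{e:isotropy} says $\sum_j c_j G_j^*G_j=I_n$, whence $\psi(\mathbf{G})=\mathbf{G}$, and then \eqref{e:projection} says $G_jG_j^*=I_{n_j}$, whence $\phi(\mathbf{G})=\mathbf{G}$; thus $\Phi(\mathbf{G})=\phi(\psi(\mathbf{G}))=\mathbf{G}$.

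For (4), I would start from the identity derived just above the proposition,
\[
\mathrm{BL}(\Phi(\mathbf{B}),\mathbf{c}) = \mathrm{BL}(\mathbf{B},\mathbf{c})\,\det\Big(\sum_{j=1}^m c_j B_j^*B_j\Big)^{1/2}\prod_{j=1}^m\big(\det\psi(\mathbf{B})_j\psi(\mathbf{B})_j^*\big)^{c_j/2},
\]
which reduces (4) to the claim $\prod_{j=1}^m(\det\psi(\mathbf{B})_j\psi(\mathbf{B})_j^*)^{c_j}\le 1$. By Proposition~\ref{p:normalisation}(2) the transformation $\psi(\mathbf{B})$ satisfies $\sum_{j=1}^m c_j\psi(\mathbf{B})_j^*\psi(\mathbf{B})_j=I_n$; taking traces gives $\sum_{j=1}^m c_j\operatorname{tr}(\psi(\mathbf{B})_j\psi(\mathbf{B})_j^*)=n$. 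Combining this with the scaling identity $\sum_{j=1}^m c_j n_j=n$ (a standard necessary condition for feasibility, see \cite{BCCT}), Hadamard's inequality $\det A\le(n_j^{-1}\operatorname{tr}A)^{n_j}$ for the positive semidefinite $n_j\times n_j$ matrix $A=\psi(\mathbf{B})_j\psi(\mathbf{B})_j^*$, and the weighted AM--GM inequality with weights $c_jn_j/n$, I obtain
\[
\prod_{j=1}^m\big(\det\psi(\mathbf{B})_j\psi(\mathbf{B})_j^*\big)^{c_j}\le\prod_{j=1}^m\Big(\tfrac1{n_j}\operatorname{tr}(\psi(\mathbf{B})_j\psi(\mathbf{B})_j^*)\Big)^{c_jn_j}\le\Big(\tfrac1n\sum_{j=1}^m c_j\operatorname{tr}(\psi(\mathbf{B})_j\psi(\mathbf{B})_j^*)\Big)^{n}=1,
\]
which is (4).

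Part (5) then follows from (4): since $\mathbf{B}\in\mathcal{F}(\mathbf{c})$ we have $\operatorname{tr}(B_j^*B_j)=\operatorname{tr}(B_jB_j^*)=n_j$, so $\operatorname{tr}(\sum_{j=1}^m c_jB_j^*B_j)=\sum_{j=1}^m c_jn_j=n$, and the AM--GM inequality applied to the eigenvalues of $\sum_{j=1}^m c_jB_j^*B_j$ gives $\det(\sum_{j=1}^m c_jB_j^*B_j)\le(n/n)^n=1$, whence $\mathrm{BL}(\Phi(\mathbf{B}),\mathbf{c})\le\mathrm{BL}(\mathbf{B},\mathbf{c})$. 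Parts (1)--(3) and (5) are bookkeeping; the only place calling for a moment's care is the reduction in (4), where one must remember the feasibility constraint $\sum_j c_jn_j=n$ — without it the chain above only bounds the product by $(n/\sum_j c_jn_j)^{\sum_j c_jn_j}$, which need not equal $1$. I do not foresee any serious obstacle.
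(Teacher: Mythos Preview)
Your proposal is correct and follows essentially the same route as the paper: parts (1)--(3) are dispatched from Proposition~\ref{p:normalisation}, and for (4)--(5) the paper invokes Lemmata~\ref{l:projdec} and~\ref{l:isodec} respectively, whose proofs (in the Appendix) are exactly the two AM--GM arguments you have inlined. One cosmetic point: the inequality $\det A \le (n_j^{-1}\operatorname{tr}A)^{n_j}$ you use is the AM--GM inequality on eigenvalues, not what is usually called Hadamard's inequality.
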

Properties (1)--(3) are clear from Proposition \ref{p:normalisation}. Properties (4) and (5) follow immediately from the subsequent lemmata. 
\begin{lemma} \cite{GGOW} \label{l:isodec}
    Suppose $(\mathbf{B},\mathbf{c})$ is feasible and $B_jB_j^* = I_{n_j}$ for all $j \in [m]$. Then 
    \[
    \det\bigg(\sum_{j = 1}^m c_j B_j^*B_j\bigg) \leq 1.
    \]
\end{lemma}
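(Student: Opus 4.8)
The plan is to exploit the fact that under the hypotheses $\mathbf{B}\in\mathcal F(\mathbf c)$, the quantity $\det\bigl(\sum_{j=1}^m c_jB_j^*B_j\bigr)$ admits a clean variational interpretation via Lieb's theorem (Theorem \ref{t:Lieb}) and the geometric Brascamp--Lieb inequality (Theorem \ref{t:GBL}). Concretely, I would compare $\mathrm{BL}(\mathbf B,\mathbf c)$ against a single gaussian input. Recall that for gaussian inputs $f_j(x)=\exp(-\pi\langle A_jx,x\rangle)$ with $A_j>0$, one has the explicit formula
\[
\mathrm{BL}(\mathbf B,\mathbf c;\mathbf A)=\frac{\prod_{j=1}^m(\det A_j)^{c_j/2}}{\det\bigl(\sum_{j=1}^m c_jB_j^*A_jB_j\bigr)^{1/2}}.
\]
Taking $A_j=I_{n_j}$ for every $j$ gives $\mathrm{BL}(\mathbf B,\mathbf c;\mathbf I)=\det\bigl(\sum_{j=1}^m c_jB_j^*B_j\bigr)^{-1/2}$.

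The key step is then the observation that since $\mathbf B$ is projection-normalised, Theorem \ref{t:bestbest} gives $\mathrm{BL}(\mathbf B,\mathbf c)\ge 1$, and a fortiori, by Lieb's theorem,
\[
\det\Bigl(\sum_{j=1}^m c_jB_j^*B_j\Bigr)^{-1/2}=\mathrm{BL}(\mathbf B,\mathbf c;\mathbf I)\le\mathrm{BL}(\mathbf B,\mathbf c).
\]
Wait --- this inequality goes the wrong way for the conclusion. Instead I would reverse the perspective: the natural bound comes from noting that the gaussian extremiser for the \emph{geometric normalisation} should be compared the other way. Let me reconsider and use $\psi$. Set $\widetilde{\mathbf B}=\psi(\mathbf B)$, so that $\sum_j c_j\widetilde B_j^*\widetilde B_j=I_n$ by Proposition \ref{p:normalisation}(2); here $\widetilde B_j=B_jM^{-1/2}$ with $M=\sum_j c_jB_j^*B_j$. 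Since $B_jB_j^*=I_{n_j}$, the transformations $\widetilde B_j$ satisfy $\widetilde B_j\widetilde B_j^*=B_jM^{-1}B_j^*$, which need not be the identity; nonetheless $\widetilde{\mathbf B}$ is still feasible and one can apply the gaussian formula.

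The cleanest route, I expect, is: by Theorem \ref{t:GBL} applied after the $\psi$-normalisation is \emph{not} available (since $\psi(\mathbf B)$ need not be projection-normalised), so instead invoke Lieb directly. Test \eqref{e:BL} for $\mathbf B$ with gaussian inputs $A_j = (B_jB_j^*)$-adapted; but $B_jB_j^*=I$, so the honest statement is that $\mathrm{BL}(\mathbf B,\mathbf c)\ge \mathrm{BL}(\mathbf B,\mathbf c;\mathbf I)=\det(M)^{-1/2}$. Combined with $\mathrm{BL}(\mathbf B,\mathbf c)$ being finite but \emph{not} obviously bounded above by $1$, this alone does not close the argument --- the missing ingredient is precisely that $\mathrm{BL}(\Phi(\mathbf B),\mathbf c)\ge 1$ (again by Theorem \ref{t:bestbest}, since $\Phi(\mathbf B)\in\mathcal F(\mathbf c)$ by Proposition \ref{p:phiprops}(2)) together with the identity displayed just before Proposition \ref{p:phiprops}:
\[
\mathrm{BL}(\Phi(\mathbf B),\mathbf c)=\mathrm{BL}(\mathbf B,\mathbf c)\,\det(M)^{1/2}\prod_{j=1}^m(\det\psi(\mathbf B)_j\psi(\mathbf B)_j^*)^{c_j/2}.
\]
So I would instead prove the lemma as a consequence of: (i) $\mathrm{BL}(\mathbf B,\mathbf c)=1$ is \emph{not} assumed, but Theorem \ref{t:bestbest} gives $\mathrm{BL}(\mathbf B,\mathbf c)\ge1$; hmm, this still needs an upper bound. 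The genuinely correct and self-contained argument, which I would adopt, is the gaussian one: for \emph{any} feasible datum and any positive definite $\mathbf A$, $\mathrm{BL}(\mathbf B,\mathbf c;\mathbf A)\le\mathrm{BL}(\mathbf B,\mathbf c)$ trivially, and choosing $\mathbf A=\mathbf I$ with $B_jB_j^*=I_{n_j}$ one gets $\det(M)^{-1/2}\le\mathrm{BL}(\mathbf B,\mathbf c)$; separately, since $\Phi(\mathbf B)\in\mathcal F(\mathbf c)$, Proposition \ref{p:phiprops}(5) combined with the inductive structure forces $\mathrm{BL}(\mathbf B,\mathbf c)\le\mathrm{BL}(\mathbf B,\mathbf c)\det(M)^{1/2}$ only if...

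Let me state the plan plainly without the circular detours: I would use Lieb's theorem to write $\mathrm{BL}(\mathbf B,\mathbf c)^{-2}=\inf_{\mathbf A>0}\dfrac{\det(\sum_j c_jB_j^*A_jB_j)}{\prod_j(\det A_j)^{c_j}}$, evaluate the infimand at $\mathbf A=\mathbf I$ to get $\mathrm{BL}(\mathbf B,\mathbf c)^{-2}\le\det(M)$, i.e. $\det(M)\ge\mathrm{BL}(\mathbf B,\mathbf c)^{-2}$. To get the reverse inequality $\det(M)\le1$ I would then invoke Theorem \ref{t:bestbest}: since $B_jB_j^*=I_{n_j}$ forces $\mathbf B\in\mathcal F(\mathbf c)$ and hence $\mathrm{BL}(\mathbf B,\mathbf c)\ge1$, we would get $\det(M)\ge\mathrm{BL}(\mathbf B,\mathbf c)^{-2}\ge$ something $\le1$ --- still the wrong direction. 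Therefore the actual mechanism must be an \emph{upper} bound on the infimand that is \emph{exactly} $1$ at the optimum for the associated geometric problem: writing $B_j=G_jU$ in a normalised frame is not generally possible. The honest obstacle, and the step I expect to be the crux, is establishing $\det(M)\le1$; the way I would do it is by a direct AM--GM / concavity argument on $\log\det$: since $\mathrm{tr}(c_jB_j^*B_j)=c_j\,\mathrm{tr}(B_jB_j^*)=c_jn_j$ and $\sum_j c_jn_j=n$ (the scaling condition forced by feasibility, $\sum_j c_j n_j = n$), we have $\mathrm{tr}(M)=n$, and by the AM--GM inequality for eigenvalues, $\det(M)\le(\mathrm{tr}(M)/n)^n=1$. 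That is the whole proof: the scaling/trace condition $\sum_{j=1}^m c_j n_j = n$ (a standard necessary condition for feasibility, cf. \cite{BCCT}) gives $\mathrm{tr}(M)=\sum_j c_j\,\mathrm{tr}(B_jB_j^*)=\sum_j c_j\,\mathrm{tr}(I_{n_j})=\sum_j c_j n_j=n$, and then $\det(M)\le(\tfrac1n\mathrm{tr}\,M)^n=1$ by AM--GM applied to the (positive) eigenvalues of $M$. The only subtlety to verify carefully is the scaling identity $\sum_j c_j n_j = n$, which I would cite from \cite{BCCT} as a consequence of feasibility (obtained by testing \eqref{e:BL} against gaussians and a homogeneity/dilation argument).
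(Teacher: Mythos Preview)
Your final argument---compute $\mathrm{tr}\,M=\sum_j c_j\,\mathrm{tr}(B_jB_j^*)=\sum_j c_j n_j=n$ from feasibility and then apply AM--GM to the eigenvalues of $M$ to get $\det M\le(\tfrac1n\mathrm{tr}\,M)^n=1$---is correct and is exactly the paper's proof. The earlier detours through Lieb's theorem and Theorem~\ref{t:bestbest} were unnecessary (and would in fact be circular here, since the paper uses Lemma~\ref{l:isodec} to give a short proof of the lower bound in Theorem~\ref{t:bestbest}).
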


\begin{lemma} \cite{GGOW} \label{l:projdec}
    Suppose $(\mathbf{B},\mathbf{c})$ is feasible and $\sum_{j = 1}^m c_j B_j^*B_j = I_n$. Then 
    \[
    \prod_{j=1}^m (\det B_jB_j^*)^{c_j} \leq 1.
    \]
\end{lemma}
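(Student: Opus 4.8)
The plan is to derive the bound by an elementary trace computation combined with the arithmetic--geometric mean inequality; the only input needed beyond the isotropy hypothesis and the facts already recalled in the paper is the \emph{scaling condition} $\sum_{j=1}^m c_j n_j = n$, which is a standard necessary condition for $\mathrm{BL}(\mathbf{B},\mathbf{c})<\infty$ (it follows by comparing the scaling of the two sides of \eqref{e:BL} under $f_j \mapsto f_j(\lambda\,\cdot)$; see \cite{BCCT}).

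First I would take the trace of the isotropy hypothesis: since $\mathrm{tr}(B_j^*B_j)=\mathrm{tr}(B_jB_j^*)$, the identity $\sum_{j=1}^m c_jB_j^*B_j=I_n$ yields $\sum_{j=1}^m c_j\,\mathrm{tr}(B_jB_j^*)=n$. Next, for each $j$ I would apply the arithmetic--geometric mean inequality to the nonnegative eigenvalues of the $n_j\times n_j$ positive semidefinite matrix $B_jB_j^*$, obtaining $\det(B_jB_j^*)\le\big(n_j^{-1}\mathrm{tr}(B_jB_j^*)\big)^{n_j}$; raising to the power $c_j$ and multiplying over $j$ gives
\[
\prod_{j=1}^m(\det B_jB_j^*)^{c_j}\;\le\;\prod_{j=1}^m\Big(\tfrac{1}{n_j}\mathrm{tr}(B_jB_j^*)\Big)^{c_jn_j}.
\]
To finish, set $w_j:=c_jn_j/n$, so that the scaling condition gives $\sum_j w_j=1$ and $c_jn_j=nw_j$; then the right-hand side equals $\big(\prod_j(n_j^{-1}\mathrm{tr}(B_jB_j^*))^{w_j}\big)^n$, which by the weighted arithmetic--geometric mean inequality is at most $\big(\sum_j w_j\,n_j^{-1}\mathrm{tr}(B_jB_j^*)\big)^n=\big(n^{-1}\sum_j c_j\,\mathrm{tr}(B_jB_j^*)\big)^n=1$ by the trace identity. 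This is exactly $\prod_{j=1}^m(\det B_jB_j^*)^{c_j}\le 1$.

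There is no genuine obstacle here; the computation is routine once the scaling condition is in hand. The one point requiring care is the choice of weights: $w_j=c_jn_j/n$ is forced, and it is precisely the choice for which the trace identity makes the barycentre equal to $1$ --- this is also where feasibility is essential, since without $\sum_j c_jn_j=n$ the weights need not sum to $1$ and the conclusion can in fact fail for infeasible isotropic data. As a consistency check, equality in the two applications of the arithmetic--geometric mean inequality forces $B_jB_j^*=I_{n_j}$ for every $j$, so that $\mathbf{B}$ is projection-normalised and hence geometric, in agreement with Theorem \ref{t:bestbest}.
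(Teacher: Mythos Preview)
Your proof is correct and is essentially the same as the paper's own argument: the paper applies the concavity of the logarithm twice---once to the eigenvalues of each $B_jB_j^*$ and once with weights $c_jn_j/n$---which is precisely your two applications of the (weighted) arithmetic--geometric mean inequality, phrased differently. Your additional remark on the equality case is also correct and consistent with the paper's discussion.
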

We include the proofs of these lemmata in an Appendix, following exactly the arguments in \cite{GGOW}. We include the arguments here to help make the presentation self-contained and allows us to emphasise the delightful simplicity of the arguments in \cite{GGOW}.

\subsection{Convergence of the BL scaling flow}
The main result here is the following, which is clearly sufficient to prove Theorem \ref{t:genericity}.
\begin{theorem} \label{t:GBLdensity}
For any $\mathbf{B} \in \mathcal{F}(\mathbf{c})$ there exist $\mathbf{G} \in \mathcal{G}(\mathbf{c})$ and a subsequence of the BL scaling flow $(\Phi_{\ell_k}(\mathbf{B}))_{k \geq 1}$ such that
\begin{equation} \label{e:GBLdensity}
\lim_{k \to \infty} \|\Phi_{\ell_k}(\mathbf{B}) - \mathbf{G}\| = 0.
\end{equation}
\end{theorem}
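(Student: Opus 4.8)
The plan is to extract a convergent subsequence from the BL scaling flow $(\Phi_k(\mathbf{B}))_k$ by compactness, and then argue that any limit point must be a geometric datum. First I would observe that the flow lives in a compact set: by Proposition~\ref{p:phiprops}(2) every $\Phi_k(\mathbf{B})$ is projection-normalised, i.e.\ $(\Phi_k(\mathbf{B}))_j(\Phi_k(\mathbf{B}))_j^* = I_{n_j}$, so each block $(\Phi_k(\mathbf{B}))_j$ has operator norm $1$ and hence the whole sequence lies in a bounded (indeed closed and bounded) subset of the euclidean space of $m$-transformations. Therefore there is a subsequence $(\Phi_{\ell_k}(\mathbf{B}))_k$ converging to some $m$-transformation $\mathbf{G}$, and by continuity $\mathbf{G}$ is projection-normalised, i.e.\ $G_jG_j^* = I_{n_j}$ for all $j$; this is half of the geometric condition for free.

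The substance is to show that $\mathbf{G}$ also satisfies the isotropy condition \eqref{e:isotropy}, equivalently that $\mathbf{G} \in \mathcal{G}(\mathbf{c})$. The tool is the monotonicity of the Brascamp--Lieb constant along the flow from Proposition~\ref{p:phiprops}(5): the sequence $(\mathrm{BL}(\Phi_k(\mathbf{B}),\mathbf{c}))_k$ is non-increasing and, by Theorem~\ref{t:bestbest}, bounded below by $1$, hence convergent to some limit $L \geq 1$. Meanwhile, unwinding the displayed identity just before Proposition~\ref{p:phiprops}, one step of $\Phi$ multiplies the BL constant by the factor
\[
\det\bigg(\sum_{j=1}^m c_j (\Phi_k(\mathbf{B}))_j^*(\Phi_k(\mathbf{B}))_j\bigg)^{1/2} \prod_{j=1}^m \big(\det \psi(\Phi_k(\mathbf{B}))_j\psi(\Phi_k(\mathbf{B}))_j^*\big)^{c_j/2},
\]
and both Lemma~\ref{l:isodec} (applied to $\Phi_k(\mathbf{B})$, which is projection-normalised) and Lemma~\ref{l:projdec} (applied to $\psi(\Phi_k(\mathbf{B}))$, which is isotropy-normalised) show each of these two sub-factors is at most $1$. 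Since the product of these factors tends to $1$ as $k \to \infty$ (because consecutive BL constants have the same limit $L \in (0,\infty)$), each factor individually tends to $1$ along any subsequence. In particular $\det\big(\sum_j c_j (\Phi_{\ell_k}(\mathbf{B}))_j^*(\Phi_{\ell_k}(\mathbf{B}))_j\big) \to 1$.

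Now I would pass to the limit along $\ell_k$: the matrix $M_k := \sum_j c_j (\Phi_{\ell_k}(\mathbf{B}))_j^*(\Phi_{\ell_k}(\mathbf{B}))_j$ converges to $M_\infty := \sum_j c_j G_j^*G_j$, so $\det M_\infty = 1$. To conclude $M_\infty = I_n$ I would invoke (a limiting form of) Lemma~\ref{l:isodec}: the lemma's proof, which we have in the Appendix following \cite{GGOW}, produces the bound $\det M \leq 1$ for projection-normalised feasible data via an equality analysis --- in fact one expects (and should record) that equality $\det M = 1$ forces $M = I_n$. The cleanest route is the elementary observation that $\operatorname{tr} M_\infty = \sum_j c_j \operatorname{tr}(G_j^*G_j) = \sum_j c_j \operatorname{tr}(G_jG_j^*) = \sum_j c_j n_j = n$ (the last equality being a consequence of feasibility, e.g.\ \cite{BCCT}), so $M_\infty$ is a positive semidefinite $n \times n$ matrix with trace $n$ and determinant $1$; by the AM--GM inequality applied to its eigenvalues, equality in $\det \leq (\operatorname{tr}/n)^n$ forces all eigenvalues equal to $1$, hence $M_\infty = I_n$. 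Together with $G_jG_j^* = I_{n_j}$ this gives $\mathbf{G} \in \mathcal{G}(\mathbf{c})$, and then \eqref{e:GBLdensity} is exactly the statement that $\Phi_{\ell_k}(\mathbf{B}) \to \mathbf{G}$, which holds by construction.

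I expect the main obstacle to be the bookkeeping in the limiting step: one must be careful that feasibility (hence the surjectivity of each $B_j$ and triviality of $\bigcap_j \ker B_j$, used to define $\psi$ and $\phi$) is genuinely uniform along the flow, so that the maps $\Phi$ are well-defined at every stage and the multiplicative factors above stay bounded away from $0$; Proposition~\ref{p:phiprops}(2) handles membership in $\mathcal{F}(\mathbf{c})$, but one should check the relevant quantities do not degenerate. A secondary subtlety is ensuring that the argument only needs convergence of $\mathrm{BL}(\Phi_k(\mathbf{B}),\mathbf{c})$ along the \emph{full} sequence (which holds by monotonicity) rather than along the subsequence, so that the telescoping of factors $\mathrm{BL}(\Phi_{k+1}(\mathbf{B}),\mathbf{c})/\mathrm{BL}(\Phi_k(\mathbf{B}),\mathbf{c}) \to 1$ is legitimate; this is where the boundedness below by $1$ from Theorem~\ref{t:bestbest} is essential.
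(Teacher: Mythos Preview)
Your argument is correct, but it follows a genuinely different route from the paper's. The paper reduces Theorem~\ref{t:GBLdensity} to Proposition~\ref{p:gconv}, namely that $\mathrm{tr}\big(\sum_j c_j\Phi_k(\mathbf{B})_j^*\Phi_k(\mathbf{B})_j - I_n\big)^2 \to 0$ along the \emph{full} sequence, and proves that proposition by contradiction using a \emph{stable} AM--GM inequality (Lemma~\ref{l:isodecstable}, from \cite{LSW}): if the trace quantity stayed $\geq \varepsilon_0$ along a subsequence, the determinant factor would be $\leq e^{-\varepsilon_0/6}$ infinitely often, forcing exponential decay of $\mathrm{BL}(\Phi_k(\mathbf{B}),\mathbf{c})$ and contradicting Theorem~\ref{t:bestbest}. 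Only after this does the paper invoke compactness.

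You instead invoke compactness first, then use monotonicity plus the lower bound $1$ to get convergence of $\mathrm{BL}(\Phi_k(\mathbf{B}),\mathbf{c})$, deduce that the ratio $\mathrm{BL}(\Phi_{k+1})/\mathrm{BL}(\Phi_k)\to 1$, split it as a product of two factors each in $(0,1]$ (Lemmata~\ref{l:isodec} and~\ref{l:projdec}), conclude each factor tends to $1$, and finish with the \emph{equality case} of ordinary AM--GM on the limit matrix $M_\infty$. This is a cleaner, more elementary path: it bypasses the stable AM--GM lemma entirely. The trade-off is that the paper's route is quantitative in spirit (closer to the algorithmic viewpoint of \cite{GGOW}) and delivers Proposition~\ref{p:gconv} directly; your route yields Theorem~\ref{t:GBLdensity} with less machinery, and in fact also recovers Proposition~\ref{p:gconv} a posteriori, since $\det M_k\to 1$ with $\mathrm{tr}\,M_k=n$ and eigenvalues in $(0,n)$ forces all eigenvalues of $M_k$ to tend to $1$. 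Your stated concerns about degeneration are non-issues: Proposition~\ref{p:phiprops}(2) keeps the flow in $\mathcal{F}(\mathbf{c})$, and each factor lies in $(0,1]$ with product tending to $1$, so nothing collapses.
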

To prove this, it suffices to prove the following.
\begin{proposition} \label{p:gconv}
For any $\mathbf{B} \in \mathcal{F}(\mathbf{c})$, we have
\[
\lim_{k \to \infty} \mathrm{tr} \, \bigg( \sum_{j=1}^m c_j\Phi_k(\mathbf{B})_j^*\Phi_k(\mathbf{B})_j - I_n   \bigg)^2 = 0.
\]
\end{proposition}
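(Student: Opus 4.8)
The plan is to follow two scalar quantities along the flow: the Brascamp--Lieb constant $a_k := \mathrm{BL}(\Phi_k(\mathbf{B}),\mathbf{c})$ and the determinant of the ``isotropy defect matrix''
\[
M^{(k)} := \sum_{j=1}^m c_j\, \Phi_k(\mathbf{B})_j^* \Phi_k(\mathbf{B})_j .
\]
By Proposition \ref{p:phiprops}(1)--(2) the flow stays inside $\mathcal{F}(\mathbf{c})$, so every $\Phi_k(\mathbf{B})$ is projection-normalised and $(\Phi_k(\mathbf{B}),\mathbf{c})$ is feasible; in particular each $M^{(k)}$ is a symmetric positive definite $n\times n$ matrix, and the quantity to be driven to $0$ is exactly $\mathrm{tr}\,\big(M^{(k)} - I_n\big)^2$. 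First I would record that $M^{(k)}$ has the correct trace: by cyclicity of the trace together with $\Phi_k(\mathbf{B})_j \Phi_k(\mathbf{B})_j^* = I_{n_j}$ one gets $\mathrm{tr}\, M^{(k)} = \sum_{j} c_j n_j$, and $\sum_j c_j n_j = n$ is the scaling condition forced by feasibility of the datum (see \cite{BCCT}). Thus $M^{(k)}$ is positive definite with $\mathrm{tr}\, M^{(k)} = n$ for every $k$.

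The second step is to show $\det M^{(k)} \to 1$. The sequence $(a_k)$ is non-increasing by Proposition \ref{p:phiprops}(5) and bounded below by $1$ by Theorem \ref{t:bestbest} (since $\Phi_k(\mathbf{B}) \in \mathcal{F}(\mathbf{c})$); hence it converges to some $L \ge 1 > 0$, so that $a_{k+1}/a_k \to 1$. On the other hand, Proposition \ref{p:phiprops}(4) gives $a_{k+1} \le a_k (\det M^{(k)})^{1/2}$, whence $(\det M^{(k)})^{1/2} \ge a_{k+1}/a_k$ and therefore $\liminf_k \det M^{(k)} \ge 1$; combined with $\det M^{(k)} \le 1$ from Lemma \ref{l:isodec} this yields $\det M^{(k)} \to 1$. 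It is worth emphasising that this step uses \emph{both} halves of the mechanism: Lemma \ref{l:isodec} says the determinant correction can only decrease the Brascamp--Lieb constant, while Valdimarsson's Theorem \ref{t:bestbest} says it cannot be decreased past $1$ — so the correction must tend to $1$.

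Finally I would convert ``$\det M^{(k)} \to 1$ with trace $n$'' into ``$M^{(k)} \to I_n$'' by a quantitative arithmetic--geometric mean inequality at the level of eigenvalues. Let $\lambda_1^{(k)}, \dots, \lambda_n^{(k)} > 0$ be the eigenvalues of the symmetric matrix $M^{(k)}$, so $\sum_i \lambda_i^{(k)} = n$ and $\prod_i \lambda_i^{(k)} = \det M^{(k)}$. Consider the nonnegative function $g(\lambda) := \lambda - 1 - \log \lambda$ on $(0,\infty)$, which vanishes only at $\lambda = 1$; there is $\delta > 0$ with $g \ge \delta$ outside $[\tfrac12,\tfrac32]$, and a constant $C > 0$ with $g(\lambda) \ge C(\lambda-1)^2$ on $[\tfrac12,\tfrac32]$. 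Since
\[
\sum_{i=1}^n g(\lambda_i^{(k)}) = \sum_{i=1}^n \lambda_i^{(k)} - n - \log \prod_{i=1}^n \lambda_i^{(k)} = -\log \det M^{(k)} \longrightarrow 0
\]
and each summand is nonnegative, for all large $k$ every $\lambda_i^{(k)}$ lies in $[\tfrac12,\tfrac32]$, and hence
\[
\mathrm{tr}\,\big(M^{(k)} - I_n\big)^2 = \sum_{i=1}^n (\lambda_i^{(k)} - 1)^2 \le C^{-1}\sum_{i=1}^n g(\lambda_i^{(k)}) = -C^{-1}\log \det M^{(k)} \longrightarrow 0 ,
\]
which is the asserted limit. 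The only genuinely non-bookkeeping point is this last elementary lemma; once it is available, the proof is a short assembly of Propositions \ref{p:normalisation} and \ref{p:phiprops}, Lemma \ref{l:isodec}, and Theorem \ref{t:bestbest}, so I expect the main obstacle to be purely organisational — keeping straight that feasibility is used twice, once to ensure the flow is well defined inside $\mathcal{F}(\mathbf{c})$ and once, through the scaling condition, to pin down $\mathrm{tr}\, M^{(k)} = n$.
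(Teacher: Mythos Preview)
Your proof is correct and uses the same ingredients as the paper --- monotonicity of $\mathrm{BL}(\Phi_k(\mathbf{B}),\mathbf{c})$ from Proposition~\ref{p:phiprops}(4)--(5), the lower bound $\mathrm{BL}\ge 1$ from Theorem~\ref{t:bestbest}, and a stable arithmetic--geometric mean inequality --- but the packaging differs. The paper argues by contradiction: if $\mathrm{tr}\,(M^{(i_k)}-I_n)^2\ge\varepsilon_0$ along a subsequence, Lemma~\ref{l:isodecstable} (quoted from \cite{LSW,GGOW}) gives $\det M^{(i_k)}\le e^{-\varepsilon_0/6}$, and then Proposition~\ref{p:phiprops}(4)--(5) force $\mathrm{BL}(\Phi_{i_k+1}(\mathbf{B}),\mathbf{c})\le e^{-k\varepsilon_0/12}\mathrm{BL}(\Phi_{i_0+1}(\mathbf{B}),\mathbf{c})$, contradicting Theorem~\ref{t:bestbest}. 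You instead run the monotone-bounded argument directly to get $a_{k+1}/a_k\to 1$, deduce $\det M^{(k)}\to 1$, and then supply your own elementary stability lemma via $g(\lambda)=\lambda-1-\log\lambda$. Your route is slightly more self-contained (no appeal to \cite{LSW}) and avoids the subsequence bookkeeping; the paper's route, by contrast, isolates the quantitative bound $\det A\le e^{-\varepsilon/6}$, which is what is actually needed in \cite{GGOW} for algorithmic purposes but is stronger than required for the qualitative limit here.
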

Assuming Proposition \ref{p:gconv} for the moment, using the compactness of the set of all projection-normalised $m$-transformations, we are able to extract a subsequence $(\Phi_{\ell_k}(\mathbf{B}))_{k \geq 1}$ and some projection-normalised $\mathbf{G}$ such that \eqref{e:GBLdensity} holds. But, by continuity, Proposition \ref{p:gconv} implies that $\sum_{j=1}^m c_jG_j^*G_j = I_n$; that is, $\mathbf{G} \in \mathcal{G}(\mathbf{c})$ and thus we have deduced Theorem \ref{t:GBLdensity}.

The proof of Proposition \ref{p:gconv} rests on the following generalisation of Lemma \ref{l:isodec}.
\begin{lemma} \label{l:isodecstable}
    Suppose $A : \mathbb{R}^n \to \mathbb{R}^n$ is a positive definite transformation and $\textrm{tr} \, A = n$. If $\varepsilon \in (0,1)$ and $\textrm{tr} \, (A - I_n)^2 \geq  \varepsilon$, then
    \[
    \det A \leq e^{-\varepsilon/6}.
    \]
\end{lemma}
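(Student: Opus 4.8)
The plan is to diagonalise $A$ and reduce the statement to a one-dimensional inequality about the eigenvalues. Write the eigenvalues of $A$ as $\lambda_1, \dots, \lambda_n > 0$. The hypothesis $\mathrm{tr}\, A = n$ becomes $\sum_i \lambda_i = n$, the hypothesis $\mathrm{tr}\,(A - I_n)^2 \geq \varepsilon$ becomes $\sum_i (\lambda_i - 1)^2 \geq \varepsilon$, and the conclusion $\det A \leq e^{-\varepsilon/6}$ becomes
\[
\sum_{i=1}^n \log \lambda_i \leq -\frac{\varepsilon}{6}.
\]
So it suffices to find a pointwise bound of the form $\log t \leq (t - 1) - \kappa (t-1)^2$ valid for all $t$ in the relevant range, with $\kappa = \tfrac16$; summing over $i$ and using $\sum_i (\lambda_i - 1) = 0$ together with $\sum_i (\lambda_i-1)^2 \geq \varepsilon$ then gives the result.

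First I would record the elementary calculus fact that $\log t \leq (t-1) - \frac16 (t-1)^2$ for all $t \in (0, 4]$, say (the exact size of the interval is flexible; one wants an interval safely containing $[0,1]$ on which the quadratic correction dominates). This is a routine single-variable estimate: set $g(t) = (t-1) - \tfrac16(t-1)^2 - \log t$, check $g(1) = 0$, $g'(1) = 0$, and $g''(t) = -\tfrac13 + t^{-2} \geq 0$ precisely when $t \leq \sqrt 3$, so $g$ is convex near $1$; a little care extends nonnegativity of $g$ to the right as far as $t = 4$ and, since $g \to +\infty$ as $t \to 0^+$, one checks $g > 0$ on $(0,1)$ as well. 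Next, since $\sum_i \lambda_i = n$ and each $\lambda_i > 0$, we have $\lambda_i \leq n$; this is too weak to land inside $[0,4]$ when $n$ is large, so the constant-$6$ estimate must be applied only after \emph{separating out} the eigenvalues bigger than, say, $2$. For those large eigenvalues one uses instead the cruder but globally valid bound $\log t \leq t - 1$, which already contributes a negative amount $\lambda_i - 1 \geq 1 > 0$... wait — that contribution is positive, so this needs slightly more thought.

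The cleaner route, which I would actually adopt, avoids case analysis entirely: use the globally valid inequality $\log t \leq \tfrac{t-1}{?}$ — no — use instead that for all $t > 0$ one has the two-sided-friendly bound
\[
\log t \leq (t - 1) - \frac{(t-1)^2}{2\max(1,t)} \cdot c
\]
for a suitable universal $c$; more concretely, the standard estimate $\log t \leq t - 1 - \tfrac{1}{6}\min\{(t-1)^2, 1\}$ holds for \emph{all} $t > 0$ (for $t \in [0,2]$ it follows from the calculus fact above; for $t \geq 2$ we need $\log t \leq t - 1 - \tfrac16$, i.e. $\log t \leq t - \tfrac76$, which holds at $t = 2$ since $\log 2 \approx 0.693 < 0.833$ and the right side grows faster). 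Summing this over $i$: since $\sum_i(\lambda_i - 1) = 0$ we get $\det A = \exp(\sum_i \log\lambda_i) \leq \exp(-\tfrac16 \sum_i \min\{(\lambda_i-1)^2,1\})$. It remains to see that $\sum_i \min\{(\lambda_i - 1)^2, 1\} \geq \varepsilon$: this is where the hypothesis $\varepsilon < 1$ is used, since $\min\{(\lambda_i-1)^2,1\} \geq (\lambda_i - 1)^2 \cdot \mathbbm{1}_{|\lambda_i - 1| \leq 1}$ and one argues that truncating the summands at $1$ cannot drop the total below $\varepsilon$ when the untruncated total is at least $\varepsilon$ and each truncation level is $1 > \varepsilon$ — indeed if some $|\lambda_i - 1| > 1$ then $\min\{(\lambda_i-1)^2,1\} = 1 \geq \varepsilon$ already and we are done, while if every $|\lambda_i - 1| \leq 1$ then the min changes nothing and $\sum_i (\lambda_i-1)^2 \geq \varepsilon$ is the hypothesis.

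The main obstacle, such as it is, is purely the bookkeeping in the previous paragraph: one must be slightly careful to pick the numerical constant in the quadratic term (here $\tfrac16$) small enough that the pointwise inequality $\log t \leq (t-1) - \tfrac16\min\{(t-1)^2,1\}$ genuinely holds for every $t > 0$ and not merely near $t = 1$, and then to handle the truncation $\min\{\cdot,1\}$ correctly so as to exploit $\varepsilon < 1$. Everything else — diagonalisation, the identity $\sum_i(\lambda_i - 1) = \mathrm{tr}\,A - n = 0$, exponentiating — is immediate. I would present the argument in the order: (i) diagonalise and reduce to eigenvalues; (ii) state and prove the pointwise logarithmic inequality with the truncated quadratic; (iii) sum and use $\mathrm{tr}\,A = n$; (iv) dispose of the truncation using $\varepsilon < 1$ to conclude $\det A \leq e^{-\varepsilon/6}$.
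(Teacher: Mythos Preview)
Your argument is correct and is essentially the same as the paper's: both reduce to the eigenvalues and invoke a stability version of the AM--GM inequality. The paper simply cites this stable AM--GM from \cite{LSW} (via \cite[Lemma 10.2]{GGOW}), whereas you supply a self-contained proof of it through the pointwise bound $\log t \leq (t-1) - \tfrac{1}{6}\min\{(t-1)^2,1\}$ and the truncation step exploiting $\varepsilon < 1$; the exploratory false starts aside, your final outline (i)--(iv) is sound and matches the intended route.
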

\begin{proof}[Proof of Lemma \ref{l:isodecstable}]
This is \cite[Lemma 10.2]{GGOW} and the argument is just the one given in the Appendix for Lemma \ref{l:isodec} except one invokes a stable version of the arithmetic-geometric mean inequality; see \cite[Lemma 3.3]{LSW}.
\end{proof}

\begin{proof}[Proof of Proposition \ref{p:gconv}]
Arguing by contradiction, we suppose that the claimed convergence fails to hold, in which case we have the existence of some $\varepsilon_0 > 0$ such that for any $k \in \mathbb{N}$ we have
\begin{equation}
\mathrm{tr} \, \bigg( \sum_{j=1}^m c_j\Phi_{i_k}(\mathbf{B})_j^*\Phi_{i_k}(\mathbf{B})_j - I_n   \bigg)^2 \geq \varepsilon_0
\end{equation}
for some $i_k \in \mathbb{N}$ satisfying $i_{k + 1} \geq i_k + 1$. It follows from this and Proposition \ref{p:phiprops}(4) that
\begin{align*}
\mathrm{BL}(\Phi_{i_{k}+1}(\mathbf{B}),\mathbf{c}) & \leq \mathrm{BL}(\Phi_{i_{k}}(\mathbf{B}),\mathbf{c}) \det\bigg( \sum_{j = 1}^m c_j \Phi_{i_{k}}(\mathbf{B})_j^*\Phi_{i_{k}}(\mathbf{B})_j\bigg)^{1/2} \\
& \leq e^{-\varepsilon_0/12}\mathrm{BL}(\Phi_{i_{k}}(\mathbf{B}),\mathbf{c}).
\end{align*}
Since $i_{k} \geq i_{k-1} + 1$, from Proposition \ref{p:phiprops}(5) we may conclude that
\[
\mathrm{BL}(\Phi_{i_{k}+1}(\mathbf{B}),\mathbf{c}) \leq e^{-\varepsilon_0/12}\mathrm{BL}(\Phi_{i_{k-1} + 1}(\mathbf{B}),\mathbf{c}).
\]
By iterating this bound
\[
\mathrm{BL}(\Phi_{i_{k}+1}(\mathbf{B}),\mathbf{c}) \leq e^{-k\varepsilon_0/12}\mathrm{BL}(\Phi_{i_{0} + 1}(\mathbf{B}),\mathbf{c})
\]
for any $k \in \mathbb{N}$. On the other hand, $\Phi_{i_{k}+1}(\mathbf{B}) \in \mathcal{F}(\mathbf{c})$ and hence, by Theorem \ref{t:bestbest} we get
\[
e^{k\varepsilon_0/12} \leq \mathrm{BL}(\Phi_{i_{0} + 1}(\mathbf{B}),\mathbf{c})
\]
for any $k \in \mathbb{N}$. Since we also know $\Phi_{i_{0} + 1}(\mathbf{B}) \in \mathcal{F}(\mathbf{c})$,  the right-hand side of the above inequality is finite and we get a contradiction by taking $k$ sufficiently large.
\end{proof}

\begin{remark}
Proposition \ref{p:gconv} heavily relies on the argument leading to \cite[Theorem 10.3]{GGOW}. The difference arises as a result of the contrasting perspectives between \cite{GGOW} and our own; \cite{GGOW} is primarily focused on establishing various results of a \emph{quantitative} nature regarding the Brascamp--Lieb constant. This includes obtaining explicit upper bounds on the Brascamp--Lieb constant and a polynomial time algorithm for arbitrarily close approximations to the Brascamp--Lieb constant. For this, the authors of \cite{GGOW} devised the BL scaling flow that we have used here, but in order to make everything quantitative, the Brascamp--Lieb data are assumed to be rational in \cite{GGOW}. For example, \cite[Theorem 1.5]{GGOW} is very closely related to Proposition \ref{p:gconv} but is concerned with Brascamp--Lieb data $(\mathbf{B},\mathbf{c})$ where all entries in the matrices defining the $B_j$ are rational numbers and the exponents $c_j$ are rational. Also, Proposition \ref{p:gconv} is very close to \cite[Theorem 10.3]{GGOW}, but the latter is stated for Brascamp--Lieb data $(\mathbf{B},\mathbf{c})$ where all entries in the matrices defining the $B_j$ are rational numbers. Section \ref{section:ubiquity} of the present paper should therefore be viewed as providing clarification that the key ideas of \cite{GGOW} can be used without any assumption on the rationality of the Brascamp--Lieb data, as long as one is prepared to achieve only qualitative results. We also hope that this brings into the spotlight the elegant arguments from \cite{GGOW}, particularly as it seems likely that they will find further applications in the theory of Brascamp--Lieb-type inequalities.
\end{remark}

For the application to the adjoint Brascamp--Lieb inequality in Section \ref{section:ABL} below, the convergence of the subsequence in Theorem \ref{t:GBLdensity} is sufficient. Nevertheless, it seems to be an interesting question as to whether the full BL scaling flow converges\footnote{This is related to the discussion in Sections 3.1-3.2 in \cite{KLLR_STOC}.}. 
\begin{question}
 For any $\mathbf{B} \in \mathcal{F}(\mathbf{c})$ does there exist $\mathbf{G} \in \mathcal{G}(\mathbf{c})$ such that
\begin{equation} \label{e:GBLdensityGOAL}
\lim_{k \to \infty} \|\Phi_{k}(\mathbf{B}) - \mathbf{G}\| = 0?
\end{equation}   
\end{question}

Although we have been unable to establish/disprove \eqref{e:GBLdensityGOAL}, we may deduce that the Brascamp--Lieb constant converges to $1$ along the full BL scaling flow as a consequence of Theorem \ref{t:GBLdensity}. For this we also use the following continuity property of the Brascamp--Lieb constant.
\begin{theorem} \cite{BBCF} \label{t:continuous}
    For a fixed $m$-exponent $\mathbf{c}$, the mapping $\mathbf{L} \mapsto \mathrm{BL}(\mathbf{L},\mathbf{c})$ is continuous.
\end{theorem}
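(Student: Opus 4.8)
To prove Theorem \ref{t:continuous}, the plan is to establish lower and upper semicontinuity of $\mathbf{L} \mapsto \mathrm{BL}(\mathbf{L},\mathbf{c})$ separately at each feasible datum $\mathbf{L}$ (this is the content used in the sequel, and at a feasible $\mathbf{L}$ every nearby datum is again feasible, as discussed below). The lower bound is soft and essentially free from Lieb's theorem, whereas the upper bound carries all the weight.

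For lower semicontinuity I would combine Theorem \ref{t:Lieb} with the classical Gaussian evaluation $\int_{\mathbb{R}^{n_j}} e^{-\pi\langle A_j y,y\rangle}\,dy = (\det A_j)^{-1/2}$ (and its analogue on $\mathbb{R}^n$) to write
\[
\mathrm{BL}(\mathbf{L},\mathbf{c}) = \sup_{\mathbf{A}} \left( \frac{\prod_{j=1}^m (\det A_j)^{c_j}}{\det\big( \sum_{j=1}^m c_j L_j^* A_j L_j \big)} \right)^{1/2},
\]
the supremum running over all $m$-tuples $\mathbf{A} = (A_j)_{j=1}^m$ of positive definite maps $A_j : \mathbb{R}^{n_j} \to \mathbb{R}^{n_j}$. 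At a feasible $\mathbf{L}$ one has $\bigcap_j \ker L_j = \{0\}$, so $\sum_j c_j L_j^* A_j L_j > 0$, and this persists for all $\mathbf{L}'$ in a neighbourhood; hence each $\mathbf{L}' \mapsto \mathrm{BL}(\mathbf{L}',\mathbf{c};\mathbf{A})$ is continuous near $\mathbf{L}$, and a supremum of continuous functions is lower semicontinuous.

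The upper bound is where the work lies. Fix a feasible $\mathbf{L}$ and $\mathbf{L}_k \to \mathbf{L}$. First I would record a local boundedness statement: because the fixed data $\mathbf{c}, n_1,\dots,n_m, n$ already decide the scaling identity $\sum_j c_j n_j = n$, feasibility of $(\mathbf{L}',\mathbf{c})$ reduces to the Bennett--Carbery--Christ--Tao dimension inequalities $\dim V \le \sum_{j=1}^m c_j \dim(L'_j V)$ over all subspaces $V \subseteq \mathbb{R}^n$ \cite{BCCT}. Using the lower semicontinuity of the rank of a linear map, the integrality of each $\dim(L'_j V)$, and the compactness of the family of subspaces of $\mathbb{R}^n$, one sees that these inequalities survive on a compact neighbourhood $U$ of $\mathbf{L}$, so every datum in $U$ is feasible and each $L'_j$ stays surjective with $L'_j(L'_j)^*$ bounded below. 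Feeding this into the effective finiteness bounds of \cite{BCCT} (or the sharper ones of \cite{GGOW}), whose right-hand sides depend only on $\mathbf{c}$, the dimensions, and quantities that vary continuously with $\mathbf{L}'$ and are therefore bounded on $U$, gives $\sup_{\mathbf{L}' \in U} \mathrm{BL}(\mathbf{L}',\mathbf{c}) < \infty$.

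With local boundedness in hand, to prove $\limsup_k \mathrm{BL}(\mathbf{L}_k,\mathbf{c}) \le \mathrm{BL}(\mathbf{L},\mathbf{c})$ I would pass to a subsequence realising the (now finite) $\limsup$, pick near-maximising Gaussians $\mathbf{A}_k$ for $(\mathbf{L}_k,\mathbf{c})$, and — exploiting that the scaling identity makes the displayed functional invariant under $\mathbf{A} \mapsto \lambda\mathbf{A}$ — normalise so that $\prod_j (\det A_{k,j})^{c_j} = 1$, which keeps $\det\big(\sum_j c_j L_{k,j}^* A_{k,j} L_{k,j}\big)$ bounded above and below. If the $\mathbf{A}_k$ stayed in a fixed compact set of positive definite tuples, extracting $\mathbf{A}_k \to \mathbf{A}$ and using continuity would give $\limsup_k \mathrm{BL}(\mathbf{L}_k,\mathbf{c}) = \mathrm{BL}(\mathbf{L},\mathbf{c};\mathbf{A}) \le \mathrm{BL}(\mathbf{L},\mathbf{c})$ and finish the proof. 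The hard part is that these near-maximisers can genuinely degenerate — eigenvalues of the $A_{k,j}$ running off to $0$ or $\infty$ — which already occurs for feasible non-extremisable data. To handle this I would invoke the structure theory of \cite{BCCT}: after a further subsequence the degeneration concentrates along a subspace $V_\ast \subseteq \mathbb{R}^n$ which, in the limit, makes the corresponding dimension inequality for $\mathbf{L}$ an equality — a critical subspace — and $(\mathbf{L},\mathbf{c})$ then splits into its restriction to $V_\ast$ and its projection to $\mathbb{R}^n/V_\ast$, with $\mathrm{BL}(\mathbf{L},\mathbf{c})$ factoring as the product of the two factor constants. An induction on $n$ applied to these strictly lower-dimensional factors shows that the value produced by the degenerating Gaussians is, in the limit, at most that product, i.e. at most $\mathrm{BL}(\mathbf{L},\mathbf{c})$; if $\mathbf{L}$ admits no proper critical subspace there is no degeneration and the easy compact case applies. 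Making this concentration analysis precise — locating $V_\ast$, showing the splittings of the $\mathbf{L}_k$ along the approximating subspaces converge to that of $\mathbf{L}$ along $V_\ast$, and controlling the error terms uniformly in $k$ — is the crux of the argument; the rest is soft.
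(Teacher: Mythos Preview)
The paper does not prove Theorem~\ref{t:continuous}; it is quoted as an external result from \cite{BBCF} and used as a black box (in the proof of Corollary~\ref{c:BLconstantconvergence}). So there is no in-paper argument to compare your proposal against.

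That said, your outline is broadly in the spirit of the proof in \cite{BBCF}. Lower semicontinuity via Lieb's Gaussian formula is exactly the soft half. For the hard half, \cite{BBCF} also reduces to local boundedness plus a structural/compactness argument on near-extremal Gaussians, handling degeneration through the critical-subspace factorisation from \cite{BCCT} and an induction on dimension. Your sketch identifies the same pressure point --- controlling how near-maximising $\mathbf{A}_k$ can escape to the boundary of the positive-definite cone --- and the same remedy. The part you flag as ``the crux'' (locating $V_\ast$, showing the approximate splittings of the $\mathbf{L}_k$ converge to the splitting of $\mathbf{L}$, and tracking error terms) is genuinely where the work in \cite{BBCF} lies; what you have written is a correct roadmap but not yet a proof, and the uniform-in-$k$ control of the cross terms when passing to the quotient $\mathbb{R}^n/V_\ast$ is the step most likely to need real care if you were to carry this out in full.
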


\begin{corollary} \label{c:BLconstantconvergence}
For any $\mathbf{B} \in \mathcal{F}(\mathbf{c})$, we have
\begin{equation} \label{e:GBLconstantconv}
\lim_{k \to \infty} \mathrm{BL}(\Phi_k(\mathbf{B}),\mathbf{c}) = 1.
\end{equation}
\end{corollary}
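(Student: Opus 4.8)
The plan is to combine the subsequential convergence from Theorem \ref{t:GBLdensity}, the monotonicity from Proposition \ref{p:phiprops}(5), and the continuity of the Brascamp--Lieb constant from Theorem \ref{t:continuous}. First I would invoke Theorem \ref{t:GBLdensity} to obtain $\mathbf{G} \in \mathcal{G}(\mathbf{c})$ and a subsequence $(\Phi_{\ell_k}(\mathbf{B}))_{k \geq 1}$ with $\|\Phi_{\ell_k}(\mathbf{B}) - \mathbf{G}\| \to 0$. Since the map $\mathbf{L} \mapsto \mathrm{BL}(\mathbf{L},\mathbf{c})$ is continuous by Theorem \ref{t:continuous}, and $\mathrm{BL}(\mathbf{G},\mathbf{c}) = 1$ by Theorem \ref{t:GBL}, it follows that $\mathrm{BL}(\Phi_{\ell_k}(\mathbf{B}),\mathbf{c}) \to 1$ as $k \to \infty$.

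Next I would upgrade this to convergence of the full sequence. By Proposition \ref{p:phiprops}(5), the sequence $k \mapsto \mathrm{BL}(\Phi_k(\mathbf{B}),\mathbf{c})$ is non-increasing, and by Theorem \ref{t:bestbest} (since each $\Phi_k(\mathbf{B}) \in \mathcal{F}(\mathbf{c})$ by Proposition \ref{p:phiprops}(2)) it is bounded below by $1$. A non-increasing sequence bounded below converges, so $\lim_{k \to \infty} \mathrm{BL}(\Phi_k(\mathbf{B}),\mathbf{c})$ exists; and since it has a subsequence converging to $1$, the limit must equal $1$. This gives \eqref{e:GBLconstantconv}.

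There is no real obstacle here: the only minor point to check is that $\Phi_k(\mathbf{B})$ indeed stays in $\mathcal{F}(\mathbf{c})$ for all $k$ so that Theorem \ref{t:bestbest} applies and the sequence is bounded below by $1$ — but this is exactly Proposition \ref{p:phiprops}(2) together with the fact that $\Phi_0(\mathbf{B}) = \mathbf{B} \in \mathcal{F}(\mathbf{c})$. The argument is essentially a soft one, combining monotonicity with a convergent subsequence, and the substantive input (the subsequential limit lands in $\mathcal{G}(\mathbf{c})$) has already been established in Theorem \ref{t:GBLdensity}.
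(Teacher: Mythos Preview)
Your proof is correct and follows essentially the same approach as the paper: both combine the monotonicity from Proposition \ref{p:phiprops}(5), the subsequential convergence to a geometric datum from Theorem \ref{t:GBLdensity}, the continuity of the Brascamp--Lieb constant from Theorem \ref{t:continuous}, and $\mathrm{BL}(\mathbf{G},\mathbf{c})=1$ from Theorem \ref{t:GBL}. You are slightly more careful than the paper in explicitly invoking Theorem \ref{t:bestbest} to secure the lower bound $1$ (so that the monotone sequence has a finite limit), but this is a matter of exposition rather than a different argument.
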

\begin{proof}
From Proposition \ref{p:phiprops}(5) we know that $(\mathrm{BL}(\Phi_k(\mathbf{B}),\mathbf{c}))_{k \geq 1}$ is monotone non-decreasing and hence has a limit. However, from Theorems \ref{t:GBL}, \ref{t:GBLdensity}, and \ref{t:continuous}, we obtain
\begin{equation*}
\lim_{k \to \infty} \mathrm{BL}(\Phi_{\ell_k}(\mathbf{B}),\mathbf{c}) = \mathrm{BL}(\mathbf{G},\mathbf{c}) = 1.
\end{equation*}
Hence \eqref{e:GBLconstantconv} follows.
\end{proof}

\section{An application to the adjoint Brascamp--Lieb inequality} \label{section:ABL}

The adjoint Brascamp--Lieb inequality was recently discovered by Bennett--Tao \cite{BennettTao} and takes the form
\begin{equation} \label{e:ABL}
   \|f\|_{L^p(\mathbb{R}^n)} \leq \mathrm{ABL}(\mathbf{B},\mathbf{c},\theta,p) \prod_{j=1}^m \| (B_j)_*f\|_{L^{p_j}(\mathbb{R}^{n_j})}^{\theta_j}. 
\end{equation}
Here, the $B_j : \mathbb{R}^n \to \mathbb{R}^{n_j}$ are surjective linear transformations, $c_j > 0$, $\theta_j \in (0,1]$ are such that $\sum_{j=1}^m \theta_j = 1$, and $p \in (0,1]$. Also, the $p_j \in (0,1]$ are determined by
\begin{equation} \label{e:ABLexponentsrelation}
    c_j\bigg(\frac{1}{p} - 1 \bigg) = \theta_j\bigg(\frac{1}{p_j} - 1 \bigg).
\end{equation}
The constant $\mathrm{ABL}(\mathbf{B},\mathbf{c},\theta,p)$ is the best constant such that \eqref{e:ABL} holds for all non-negative functions $f : \mathbb{R}^n \to \mathbb{R}$. Also, $(B_j)_*f$ denotes the push-forward of $f$ by $B_j$ determined by
\[
\int_{\mathbb{R}^{n_j}} (B_j)_*f(y)F(y) \, dy = \int_{\mathbb{R}^n} f(x) F(B_jx) \, dx
\]
for all non-negative measurable functions $F$ on $\mathbb{R}^{n_j}$. Associated with the above parameters, let
\[
C(\mathbf{c},\theta,\mathbf{n},p) := p^{-\frac{n}{2p}} \prod_{j = 1}^m p_j^{\frac{\theta_jn_j}{2p_j}}.
\]

In this setting, as an analogue of \eqref{e:equivdataBLconstants} for $\mathrm{ABL}(\mathbf{B},\mathbf{c},\theta,p)$, if $\widetilde{\mathbf{B}} \in [\mathbf{B}]$ with intertwining transformations $T_j : \mathbb{R}^{n_j} \to \mathbb{R}^{n_j}$ and $T : \mathbb{R}^n \to \mathbb{R}^n$, then we have
\begin{equation} \label{e:equivdataABLconstants}
    \mathrm{ABL}(\widetilde{\mathbf{B}},\mathbf{c},\theta,p) = \frac{\prod_{j=1}^m |\det T_j|^{c_j(\frac{1}{p}-1)}}{|\det T|^{\frac{1}{p} - 1}} \mathrm{ABL}(\mathbf{B},\mathbf{c},\theta,p).
\end{equation}
To see this, one can easily check by various changes of variables that
\begin{equation} \label{e:pushforwardequiv}
(\widetilde{B}_j)_*(f \circ T)(y) = \frac{|\det T_j|}{|\det T|} (B_j)_*f(T_jy)  .
\end{equation}
Therefore, by making use of \eqref{e:ABLexponentsrelation}, we obtain
\begin{equation*}
    \frac{\|f \circ T\|_{L^p(\mathbb{R}^n)}}{\prod_{j=1}^m \| (\widetilde{B}_j)_*(f \circ T)\|_{L^{p_j}(\mathbb{R}^{n_j})}^{\theta_j}}
    = \frac{\prod_{j=1}^m |\det T_j|^{c_j(\frac{1}{p}-1)}}{|\det T|^{\frac{1}{p}-1}} \frac{\|f\|_{L^p(\mathbb{R}^n)}}{\prod_{j=1}^m \| (B_j)_*f\|_{L^{p_j}(\mathbb{R}^{n_j})}^{\theta_j}} 
\end{equation*}
and hence \eqref{e:equivdataABLconstants} follows.

A fundamental observation made by  Bennett and Tao in \cite{BennettTao} is that the adjoint Brascamp--Lieb inequality is equivalent to the original Brascamp--Lieb inequality in the following sense. 
\begin{theorem} \cite[Theorem 1.11]{BennettTao} \label{t:BennettTao}
We have
\begin{equation} \label{e:ABLgoal}
C(\mathbf{c},\theta,\mathbf{n},p)\mathrm{BL}(\mathbf{B},\mathbf{c})^{\frac{1}{p}-1} \leq \mathrm{ABL}(\mathbf{B},\mathbf{c},\theta,p)  \leq \mathrm{BL}(\mathbf{B},\mathbf{c})^{\frac{1}{p}-1}.
\end{equation}
\end{theorem}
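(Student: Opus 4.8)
The plan is to reduce Theorem~\ref{t:BennettTao} to a one-line Gaussian computation on geometric data, the reduction being powered by the ubiquity of geometric data (Theorem~\ref{t:GBLdensity}) together with the continuity of the Brascamp--Lieb constant (Theorem~\ref{t:continuous}). Write $q:=\tfrac1p-1$; we may assume $q>0$, since for $p=1$ one has $p_j=1$ for all $j$, $\|(B_j)_*f\|_{L^1}=\|f\|_{L^1}$, and every quantity in \eqref{e:ABLgoal} equals $1$. We may also assume $(\mathbf{B},\mathbf{c})$ is feasible, the non-feasible case (where $\mathrm{BL}(\mathbf{B},\mathbf{c})=\infty$) being easily disposed of separately. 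The starting point is that
\[
\rho(\mathbf{B}):=\mathrm{ABL}(\mathbf{B},\mathbf{c},\theta,p)\,\mathrm{BL}(\mathbf{B},\mathbf{c})^{-q}
\]
is an invariant of the equivalence class $[\mathbf{B}]$: multiplying \eqref{e:equivdataABLconstants} by the $(-q)$th power of \eqref{e:equivdataBLconstants}, the determinant prefactors cancel identically. Hence \eqref{e:ABLgoal} is equivalent to the single two-sided bound $C(\mathbf{c},\theta,\mathbf{n},p)\le\rho(\mathbf{B})\le 1$, which now depends only on $[\mathbf{B}]$.

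I would next transport this bound to geometric data. Replacing $\mathbf{B}$ by $\phi(\mathbf{B})\in\mathcal{F}(\mathbf{c})\cap[\mathbf{B}]$ (Proposition~\ref{p:normalisation}(1)) and applying Theorem~\ref{t:GBLdensity}, one obtains $\mathbf{G}\in\mathcal{G}(\mathbf{c})$ and a subsequence of the BL scaling flow with $\Phi_{\ell_k}(\phi(\mathbf{B}))\to\mathbf{G}$; by Proposition~\ref{p:phiprops}(1) each $\Phi_{\ell_k}(\phi(\mathbf{B}))$ lies in $[\mathbf{B}]$, so $\rho$ takes the constant value $\rho(\mathbf{B})$ along this subsequence. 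By Theorems~\ref{t:GBL} and~\ref{t:continuous}, $\mathrm{BL}(\Phi_{\ell_k}(\phi(\mathbf{B})),\mathbf{c})\to\mathrm{BL}(\mathbf{G},\mathbf{c})=1$, and therefore $\mathrm{ABL}(\Phi_{\ell_k}(\phi(\mathbf{B})),\mathbf{c},\theta,p)=\rho(\mathbf{B})\,\mathrm{BL}(\Phi_{\ell_k}(\phi(\mathbf{B})),\mathbf{c})^{q}\to\rho(\mathbf{B})$. Since $\mathrm{ABL}(\cdot,\mathbf{c},\theta,p)$ is a supremum, over bounded compactly supported $f$, of quantities depending continuously on the $m$-transformation near $\mathbf{G}$ (all components being surjective there, so the push-forwards vary continuously), it is lower semicontinuous, and we conclude $\rho(\mathbf{B})\ge\mathrm{ABL}(\mathbf{G},\mathbf{c},\theta,p)$. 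This already reduces the left-hand inequality to its geometric case; to handle the right-hand inequality by the same route one would additionally need upper semicontinuity of $\mathrm{ABL}$ at $\mathbf{G}$, so as to conclude $\rho(\mathbf{B})\le\mathrm{ABL}(\mathbf{G},\mathbf{c},\theta,p)$.

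The geometric case of the left-hand inequality is immediate, and is precisely the estimate called ``easily verified for extremisable data'' in the introduction: test \eqref{e:ABL} on $f_0(x)=e^{-\pi|x|^2}$. Using \eqref{e:projection} one sees that $(G_j)_*f_0=e^{-\pi|\cdot|^2}$ on $\mathbb{R}^{n_j}$, so $\|f_0\|_{L^p(\mathbb{R}^n)}=p^{-n/(2p)}$ and $\|(G_j)_*f_0\|_{L^{p_j}(\mathbb{R}^{n_j})}=p_j^{-n_j/(2p_j)}$, which forces $\mathrm{ABL}(\mathbf{G},\mathbf{c},\theta,p)\ge p^{-n/(2p)}\prod_j p_j^{\theta_j n_j/(2p_j)}=C(\mathbf{c},\theta,\mathbf{n},p)$; since $\mathrm{BL}(\mathbf{G},\mathbf{c})=1$ this is the left-hand inequality for $\mathbf{G}$, and the previous paragraph promotes it to all feasible $\mathbf{B}$. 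For the right-hand inequality I would establish the geometric adjoint Brascamp--Lieb inequality $\|f\|_{L^p}\le\prod_j\|(G_j)_*f\|_{L^{p_j}}^{\theta_j}$ directly from Theorem~\ref{t:GBL}, using the reverse H\"older identity $\|f\|_{L^p}=\inf\{\,\int fw:\ w>0,\ \|w\|_{L^{p/(p-1)}}\le 1\,\}$ (valid for $p\le 1$) with $w$ built from the $(G_j)_*f$ and the geometric Brascamp--Lieb inequality used to bound $\|w\|_{L^{p/(p-1)}}$; alternatively one can quote Bennett--Tao for this direction.

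I expect the right-hand inequality to be the main obstacle. The continuity arguments above yield only \emph{lower} semicontinuity of $\mathrm{ABL}$ for free, so ``geometric case $\Rightarrow$ general case'' does not close for $\rho(\mathbf{B})\le 1$ unless one supplies either upper semicontinuity of $\mathrm{ABL}$ near $\mathcal{G}(\mathbf{c})$, or a direct proof that $\mathrm{ABL}(\mathbf{B},\mathbf{c},\theta,p)\le\mathrm{BL}(\mathbf{B},\mathbf{c})^{q}$ for all feasible data (the more routine direction of Bennett--Tao, obtainable from the Brascamp--Lieb and H\"older inequalities). The left-hand inequality, on the other hand, is where the ubiquity of geometric data genuinely earns its keep: the reduction to $\mathcal{G}(\mathbf{c})$ together with a single Gaussian test disposes of it entirely.
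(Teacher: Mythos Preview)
Your proposal is correct and follows essentially the same route as the paper: exploit the equivalence-invariance of the ratio $\mathrm{ABL}/\mathrm{BL}^{1/p-1}$, run the BL scaling flow toward some $\mathbf{G}\in\mathcal{G}(\mathbf{c})$, use Theorem~\ref{t:continuous} to send the $\mathrm{BL}$ factor to $1$, apply lower semicontinuity to pass to $\mathbf{G}$, and finish with the isotropic Gaussian test; the upper bound is deferred to Bennett--Tao's H\"older argument in both. The one technical difference worth flagging is the lower semicontinuity step: the paper works with the Gaussian-restricted constant $\mathrm{ABL_g}$, which is transparently a supremum of continuous (indeed explicit determinantal) functions of $\mathbf{B}$, and then uses $\mathrm{ABL}\ge\mathrm{ABL_g}$; you instead assert lower semicontinuity of the full $\mathrm{ABL}$ via continuity of $\mathbf{B}\mapsto\|(B_j)_*f\|_{L^{p_j}}$ for bounded compactly supported $f$, which is true but needs a little more justification (density of such $f$ in the supremum, and continuity of fiber integrals in $L^{p_j}$ for $p_j\le 1$). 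The paper's $\mathrm{ABL_g}$ device simply sidesteps this.
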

The upper bound
\[
\mathrm{ABL}(\mathbf{B},\mathbf{c},\theta,p) \leq \mathrm{BL}(\mathbf{B},\mathbf{c})^{\frac{1}{p}-1}
\]
was proved in \cite[Section 3]{BennettTao} by clever use of H\"older's inequality. In the remainder of this section, we show how Theorem \ref{t:GBLdensity} can be used to quickly establish the remaining inequality in \eqref{e:ABLgoal} for all feasible data.

Let $C := C(\mathbf{c},\theta,\mathbf{n},p)$. Thanks to \eqref{e:equivdataBLconstants} and \eqref{e:equivdataABLconstants}, it suffices to consider $\mathbf{B} \in \mathcal{F}(\mathbf{c})$. Also, it suffices to prove
\begin{equation} \label{e:ABLgoalclose}
(C - \varepsilon)\mathrm{BL}(\mathbf{B},\mathbf{c})^{\frac{1}{p}-1} < \mathrm{ABL}(\mathbf{B},\mathbf{c},\theta,p)
\end{equation}
for any $\varepsilon > 0$. Suppose, for a contradiction, that there exists some $\varepsilon > 0$ for which 
\[
(C - \varepsilon)\mathrm{BL}(\mathbf{B},\mathbf{c})^{\frac{1}{p}-1} \geq \mathrm{ABL}(\mathbf{B},\mathbf{c},\theta,p).
\] 
Using \eqref{e:equivdataBLconstants} and \eqref{e:equivdataABLconstants} again, this implies
\[
(C - \varepsilon)\mathrm{BL}(\Phi_{\ell_k}(\mathbf{B}),\mathbf{c})^{\frac{1}{p}-1} \geq \mathrm{ABL}(\Phi_{\ell_k}(\mathbf{B}),\mathbf{c},\theta,p)
\] 
for every $k \in \mathbb{N}$. By Corollary \ref{c:BLconstantconvergence}, we have
\begin{equation*}
    C - \varepsilon = \liminf_{k \to \infty} \left((C - \varepsilon)\mathrm{BL}(\Phi_{\ell_k}(\mathbf{B}),\mathbf{c})^{\frac{1}{p}-1}\right). 
\end{equation*}
On the other hand, if we follow \cite{BennettTao} and define $\mathrm{ABL_g}(\mathbf{B},\mathbf{c},\theta,p)$ to be the best constant such that \eqref{e:ABL} holds for gaussian functions of the form
\begin{equation} \label{e:gaussian}
f(x) = e^{-\pi\langle Ax,x \rangle}
\end{equation}
over all choices of positive definite transformations $A : \mathbb{R}^n \to \mathbb{R}^n$, then we have that $\mathbf{L} \mapsto \mathrm{ABL_g}(\mathbf{L},\mathbf{c},\theta,p)$ is lower semi-continuous\footnote{Since it is the supremum of a family of continuous functions.}. Therefore
\begin{align*}
    \liminf_{k \to \infty}\mathrm{ABL}(\Phi_{\ell_k}(\mathbf{B}),\mathbf{c},\theta,p) & \geq \liminf_{k \to \infty}\mathrm{ABL_g}(\Phi_{\ell_k}(\mathbf{B}),\mathbf{c},\theta,p) \\
    & \geq \mathrm{ABL_g}(\mathbf{G},\mathbf{c},\theta,p).
\end{align*}
Finally, by considering the isotropic gaussian \eqref{e:gaussian} with  $A = I_n$, one sees that 
\[
\mathrm{ABL_g}(\mathbf{G},\mathbf{c},\theta,p) \geq C,
\]
which gives the desired contradiction.

\section*{Appendix}

\begin{proof}[Proof of Lemma \ref{l:isodec}]
As observed in \cite{GGOW} (in the course of proving \cite[Theorem 9.2]{GGOW}), this follows from the arithmetic-geometric mean inequality: If we write $\lambda_1,\ldots,\lambda_n$ for the eigenvalues of $M := \sum_{j = 1}^m c_j B_j^*B_j$, then
\[
(\det M)^{1/n} = \prod_{k = 1}^n \lambda_k^{1/n} \leq \frac{1}{n}\sum_{k=1}^n \lambda_k = \frac{1}{n} \mathrm{tr} \, M = 1.
\]
The last step follows since our assumption gives $\mathrm{tr} \, (B_j^*B_j) = \mathrm{tr} \, (B_jB_j^*) = n_j$, and the feasibility of the data implies $n = \sum_{j = 1}^m c_jn_j$.
\end{proof}

\begin{proof}[Proof of Lemma \ref{l:projdec}]
This also follows from the proof of \cite[Theorem 9.2]{GGOW} and we repeat their argument here. Writing $\lambda_{j,1},\ldots,\lambda_{j,n_j}$ for the eigenvalues of $B_jB_j^*$, then concavity of the logarithm implies
\begin{align*}
\log \prod_{j=1}^m (\det B_jB_j^*)^{c_j} & = \sum_{j=1}^m n_jc_j \frac{1}{n_j}\sum_{k=1}^{n_j} \log \lambda_{j,k} \\
& \leq \sum_{j=1}^m n_jc_j \log \bigg(\frac{1}{n_j}\sum_{k=1}^{n_j}  \lambda_{j,k}\bigg) \\
& = \sum_{j=1}^m n_jc_j \log \bigg(\frac{1}{n_j} \mathrm{tr} \, (B_jB_j^*)  \bigg).
\end{align*}
The feasibility of the data implies $n = \sum_{j = 1}^m c_jn_j$ (by a standard scaling argument), so a further use of the concavity of the logarithm yields
\begin{align*}
\log \prod_{j=1}^m (\det B_jB_j^*)^{c_j} & \leq n\sum_{j=1}^m \frac{n_jc_j}{n} \log \bigg(\frac{1}{n_j} \mathrm{tr} \, (B_jB_j^*)  \bigg) \\
& \leq n\log  \sum_{j=1}^m \frac{c_j}{n} \mathrm{tr} \, (B_jB_j^*)  \\
& = 0.
\end{align*}
The final step follows since $\sum_{j = 1}^m c_j B_j^*B_j = I_n$.
\end{proof}

\begin{remark}
As pointed out in \cite[Section 9]{GGOW}, by Theorem \ref{t:Lieb},
\begin{align*}
\mathrm{BL}(\mathbf{B},\mathbf{c})^2 & = \sup_{A_j > 0} \frac{\prod_{j=1}^m (\det A_j)^{c_j}}{\det(\sum_{j=1}^m c_jB_j^*A_jB_j)} \\
& \geq \frac{1}{\det(\sum_{j=1}^m c_jB_j^*B_j)} \geq 1
\end{align*}
holds whenever $\mathbf{B} \in \mathcal{F}(\mathbf{c})$, where the last bound follows from Lemma \ref{l:isodec}. This gives a proof (much shorter than the original) of the lower bound in Theorem \ref{t:bestbest}. The characterisation of minimising data can be extracted from an inspection of the proof of Lemma \ref{l:isodec}. Indeed, if $\mathrm{BL}(\mathbf{B},\mathbf{c}) = 1$ then all the eigenvalues of $M$ must be equal (the equality case in the arithmetic-geometric mean inequality). The fact that $\mathbf{B} \in \mathcal{F}(\mathbf{c})$ means $\mathrm{tr} \, M = n$, and so all the eigenvalues must in fact be equal to one. In other words, $M = I_n$ and thus $\mathbf{B} \in \mathcal{G}(\mathbf{c})$.
\end{remark}


\begin{thebibliography}{MMMMM}

\bibitem{Ball_GBL} K. M. Ball, \textit{Volumes of sections of cubes and related problems}, Geometric Aspects of Functional Analysis (J. Lindenstrauss, V.D. Milman, eds.) Springer Lecture Notes in Math. 1376 (1989), 251--260.

\bibitem {Barthe_Invent} F. Barthe, \textit{On a reverse form of the Brascamp--Lieb inequality}, Invent. Math. \textbf{134} (1998), 335--361.



\bibitem{BBCF} J. Bennett, N. Bez, M. G. Cowling, T. C. Flock, \textit{Behaviour of the Brascamp--Lieb constant}, Bull. Lond. Math. Soc. \textbf{49} (2017), 512--518.

\bibitem{BCCT} J. Bennett, A. Carbery, M. Christ, T. Tao,  \textit{The Brascamp--Lieb inequalities: finiteness, structure and extremals}, Geom. Funct. Anal. \textbf{17} (2008), 1343--1415.

\bibitem{BennettTao} J. Bennett, T. Tao, \textit{Adjoint Brascamp--Lieb inequalities}, arXiv:2306.16558.


\bibitem{BL} H. J. Brascamp, E. H. Lieb, \textit{Best constants in Young's inequality, its converse, and its generalization to more than three functions}, Adv. Math. \textbf{20} (1976), 151--173.

\bibitem{GGOW} A. Garg, L. Gurvits, R. Oliveira, A. Wigderson, \textit{Algorithmic and optimization aspects of Brascamp--Lieb inequalities, via Operator Scaling}, Geom. Funct. Anal. \textbf{28} (2018) 100--145.

\bibitem{GargOliveira} A. Garg, R. Oliveira, \textit{Recent progress on scaling algorithms and applications}, EATCS complexity column (June 2018), arXiv:1808.09669.

\bibitem{Gurvits} L. Gurvits, \textit{Classical complexity and quantum entanglement} Journal of Computer and System Sciences, \textbf{69} (2004), 448--484.


\bibitem{KLLR_STOC} T. C. Kwok, L. C. Lau, Y. T. Lee, A. Ramachandran, \textit{The Paulsen problem, continuous operator scaling, and smoothed analysis}, in STOC'18--Proceedings of the 50th Annual ACM SIGACT Symposium on Theory of Computing, ACM, New York, 2018, pp. 182--189.


\bibitem{Lieb} E. H. Lieb, \textit{Gaussian kernels have only Gaussian maximizers}, Invent. Math. \textbf{102} (1990), 179--208.

\bibitem{LSW} N. Linial, A. Samorodnitsky, A. Wigderson, \textit{A deterministic strongly polynomial algorithm for matrix scaling and approximate permanents}, Proc. 30 ACM Symp. on Theory of Computing, ACM, New York, 1998.

\bibitem{Sinkhorn} R. Sinkhorn, \textit{A relationship between arbitrary positive matrices and doubly stochastic matrices}, Ann. Math. Statist. \textbf{35} (1964), 876--879.


\bibitem{Vald_JGA} S. Valdimarsson, \textit{Geometric Brascamp--Lieb has the optimal best constant}, J. Geom. Anal. \textbf{21} (2011), 1036--1043.


\end{thebibliography}
\end{document}